\documentclass[11pt,leqno]{amsart}
\usepackage{amssymb}
\usepackage{amsfonts}
\usepackage{amsthm}
\usepackage{mathtools}
\usepackage{todonotes}

\setlength{\textwidth}{\paperwidth}
\addtolength{\textwidth}{-2.5in}
\calclayout

\usepackage{amsthm}

\usepackage[pagebackref,colorlinks,citecolor=blue,linkcolor=blue]{hyperref}

\usepackage{cite}   

\usepackage{amsmath,bbm,amssymb,amsxtra}

\def\Xint#1{\mathchoice
	{\XXint\displaystyle\textstyle{#1}}
	{\XXint\textstyle\scriptstyle{#1}}
	{\XXint\scriptstyle\scriptscriptstyle{#1}}
	{\XXint\scriptscriptstyle\scriptscriptstyle{#1}}
	\!\int}

\def\XXint#1#2#3{{\setbox0=\hbox{$#1{#2#3}{\int}$ }
		\vcenter{\hbox{$#2#3$ }}\kern-.6\wd0}}
\def\dashint{\Xint-}

\theoremstyle{definition}
\newtheorem{theorem}{Theorem}[section]
\newtheorem{lemma}[theorem]{Lemma}
\newtheorem{proposition}[theorem]{Proposition}
\newtheorem{remark}[theorem]{Remark}
\newtheorem{corollary}[theorem]{Corollary}

\newtheorem{example}[theorem]{Example}

\numberwithin{equation}{section}

\newcommand{\N}{{\mathbb N}}
\newcommand{\R}{{\mathbb R}}
\newcommand{\liploc}{\mathrm{Lip}_{\mathrm{loc}}}
\newcommand{\BV}{\mathrm{BV}}
\newcommand{\loc}{\mathrm{loc}}

\DeclareMathOperator{\dist}{dist}
\DeclareMathOperator{\supp}{supp}

\title[Lusin approximation  for
functions of bounded variation]{Lusin approximation  for\\
	functions of bounded variation}

\author{Panu Lahti}\address{Academy of Mathematics and Systems Science, Chinese Academy of Sciences, Beijing 100190, PR China}\email{panulahti@amss.ac.cn}

\author{Khanh Nguyen}\address{Academy of Mathematics and Systems Science, Chinese Academy of Sciences, Beijing 100190, PR China}\email{khanhnguyen@amss.ac.cn}

\subjclass[2020]{ 46E36, 30L99, 26B30\\ Key words and phrases: Lusin approximation, semicontinuity,
	function of bounded variation, limit at infinity, maximal function}

\usepackage{framed}
\usepackage[most]{tcolorbox}
\usepackage{lipsum}

\definecolor{formalshade}{rgb}{0.95,0.95,1}
\newtcolorbox{mybox}{
	enhanced,
	boxrule=0pt,
	frame hidden,
	borderline west={1pt}{0pt}{},
	borderline east={1pt}{0pt}{},
	colback=white,
	sharp corners, 
	fontupper=\itshape
}
\begin{document}
	\maketitle
	
	\begin{center}
		{ \today}
	\end{center}
	
	\begin{abstract}
		We prove a Lusin approximation of functions of bounded variation.
		If $f$ is a function of  bounded  variation on an open set $\Omega\subset X$,
		where $X=(X,d,\mu)$ is a given complete doubling metric measure space supporting a
		$1$-Poincar\'e inequality, then for every $\varepsilon>0$, there exist a function
		$f_\varepsilon$ on $\Omega$ and an open set $U_\varepsilon\subset\Omega$
		such that the following properties hold true:
		\begin{enumerate}
			\item ${\rm Cap}_1(U_\varepsilon)<\varepsilon$;
			\item $\|f-f_\varepsilon\|_{\BV(\Omega)}< \varepsilon$;
			\item  $f^\vee\equiv f_\varepsilon^\vee$ and $f^\wedge\equiv f_\varepsilon^\wedge$ on $\Omega\setminus U_\varepsilon$;
			\item $f_\varepsilon^\vee$ is upper  semicontinuous on $\Omega$, and
			$f_\varepsilon^\wedge$ is lower semicontinuous on $\Omega$.
		\end{enumerate} 
		
		If the space $X$ is unbounded, then such an approximating
		function $f_\varepsilon$ can be constructed  with the additional property that the uniform  limit at infinity of both
		$f^\vee_\varepsilon$ and $f^\wedge_\varepsilon$ is   $0$.
		Moreover, when $X=\R^d$,
		we show that the non-centered maximal function of $f_\varepsilon$ is continuous
		in $\Omega$.
	\end{abstract}
	\tableofcontents
	\section{Introduction}
	
	In this paper, we consider a complete doubling metric measure space $(X,d,\mu)$
	supporting a $p$-Poincar\'e inequality, where $1\leq p<\infty$; see
	Subsection \ref{sec-doublingPoincare} for the relevant definitions.
	For $1\leq p<\infty$, we say that a function
	$f\in L^p(X)$ is a $p$-Haj\l asz-Sobolev function on $(X,d,\mu)$, denoted by
	$f\in M^{1,p}(X)$, if there exists a
	$p$-integrable function $g:X\to [0,\infty]$ such that
	\begin{equation}\label{eq-pw}
		|f(x)-f(y)|\leq d(x,y) \big( g(x)+g(y)\big) \text{\rm \ \ for $\mu$-a.e.} \ x,y\in X.
	\end{equation}
	If $f\in M^{1,p}(X)$, then  the following $M^{1,p}$-norm 
	\[
	\|f\|_{M^{1,p}(X)}:=\|f\|_{L^p(X)}+\inf \|g\|_{L^p(X)}
	\] 
	is finite, where the infimum is taken over all nonnegative functions $g$ for which the pair $(f,g)$ satisfies the pointwise inequality \eqref{eq-pw}.
	Let $Q>1$ be a doubling dimension defined as in \eqref{doubling-dim}. If $p>Q$, then every
	$p$-Haj\l asz-Sobolev function  is locally H\"older continuous on $X$ by the Morrey inequality,
	see e.g.  \cite[Proposition 4.3]{HKi98}. 
	
	For $1\leq p\leq Q$, we say that $f$ is $p$-quasi H\"older continuous on $X$ if for every
	$\varepsilon>0$ and $0<\alpha\leq 1$, there is a set $E_\varepsilon$ with ${\mathcal H}_\infty^{Q-(1-\alpha)p}(E_\varepsilon)<\varepsilon$ so that the {\it restriction} $f|_{X\setminus E_\varepsilon}$ is  $\alpha$-H\"older
	continuous. Here ${\mathcal H}_\infty^{s}$ is the $s$-dimensional Hausdorff
	content.
	In the case $1<p\leq Q$, Haj\l asz and Kinnunen \cite[Theorem 5.3]{HKi98} showed
	that every function $f\in M^{1,p}(X)$ is $p$-quasi H\"older continuous on $X$, and
	furthermore has a
	Lusin approximation as follows. For every $f\in M^{1,p}(X)$, where $1<p\leq Q$, and
	for every $\varepsilon>0$ and $0<\alpha\leq 1$, there are    an open set $U_\varepsilon$
	and a function $f_\varepsilon$ on $X$ so that the following properties hold true:
	\begin{enumerate}
		\item ${\mathcal H}_\infty^{Q-(1-\alpha)p}(U_\varepsilon)<\varepsilon;$
		\item $\| f-f_{\varepsilon\|_{M^{1,p}(X)}}<\varepsilon;$
		\item $f\equiv f_\varepsilon$ on $X\setminus U_\varepsilon$;
		\item $f_\varepsilon$ is $\alpha$-H\"older continuous on $X$.
	\end{enumerate}
	
	In 2007, Kinnunen and  Tuominen \cite[Theorem 4]{KT07} obtained the same result  for
	$1$-Haj\l asz-Sobolev functions $f\in M^{1,1}(X)$.  For higher order or fractional 
	Haj\l asz-Sobolev functions,  analogous results have been obtained in \cite{Ma93, BHS02, Sw02, KP08, HT16}.
	
	Newtonian (Sobolev) functions $f\in N^{1,p}(X)$ are known to be $p$-quasicontinuous,
	where the exceptional set is defined using the $p$-capacity instead of Hausdorff contents.
	We refer to Subsection \ref{sec2.2} for the definitions of Newtonian spaces and capacities.
	Kinnunen, Korte, Shanmugalingam and Tuominen  \cite[Section 7]{KKSN12} proved a
	Lusin approximation for Newtonian spaces $N^{1,p}(X)$, where $1\leq p<\infty$. Namely,
	for every
	$f\in N^{1,p}(X)$ and for every $\varepsilon>0$, there are an open set $U_\varepsilon$
	and a function $f_\varepsilon$ so that the following properties hold:
	\begin{enumerate}
		\item ${\rm Cap}_p(U_\varepsilon)<\varepsilon;$
		\item $\| f-f_{\varepsilon\|_{N^{1,p}(X)}}<\varepsilon;$
		\item $f\equiv f_\varepsilon$ on $X\setminus U_\varepsilon$;
		\item $f_\varepsilon$ is  continuous on $X$.
	\end{enumerate}
	
	A similar result was previously shown in Euclidean spaces in \cite{MZ82} and \cite{Sw07}.
	
	
	For functions of bounded variation, a $1$-quasicontinuity property is too much to hope for.
	However, in 2017  the first author  and Shanmugalingam \cite[Theorem 1.1]{LS17}
	proved that for every
	$f\in \BV(X)$, the upper approximate limit $f^\vee$ and the lower approximate limit
	$f^\wedge$  are respectively $1$-quasi upper and lower semicontinuous on $X$, see
	Subsection \ref{sec2.5} for these definitions.
	On the other hand, the semicontinuity properties may fail in a very large set.
	Consider the function
	\begin{equation}\label{eq1.1-15Nov}
		f=\chi_E {\rm \ \ with\ \ }E:=\bigcup_{j=1}^\infty B(q_j,2^{-j}),
	\end{equation}
	where $q_j$ is a sequence of all points in $\mathbb R^2$ with rational coordinates.
	Here $\chi_E$ denotes the characteristic function of $E$, and  $B(x,r)$ denotes the ball centered at $x$ of radius $r>0$.
	Let $\mathcal L^2$ be the
	$2$-dimensional Lebesgue measure.
	This $f$ is easily seen to be a BV function with
	$\mathcal L^2(E)\leq \pi/2$ and with total variation $\|Df\|(\mathbb R^2)\leq 2\pi$, 
	which is defined in \eqref{eq2.7-2711}.
	Now we have $f^\vee(x)<1$ for all $x\in O_E$, where $O_E$ is
	the measure-theoretic exterior of $E$ defined in \eqref{eq2.7-15Nov},  but then there
	is $q_j\to x$ so that $f^\vee(q_j)\equiv 1$; hence $f^\vee$ is not upper semicontinuous on
	$O_E$, where ${\rm Cap}_1(O_E)\geq \mathcal L^2(O_E)=\infty$.
	Similarly, $(-f)^\wedge$ is not lower semicontinuous on $O_E$. 
	Nonetheless, for a suitable subset $G\subset \mathbb R^2$ with ${\rm Cap}_1(G)$
	smaller than any fixed $\varepsilon>0$, the \emph{restrictions}
	$f^\vee|_{\R^2\setminus G}$ and $f^\wedge|_{\R^2\setminus G}$ are
	respectively upper and lower semicontinuous.
	
	
	In \cite[Page 521]{Pa18},
	an open problem was posed
	on whether it is possible 
	to find a Lusin approximation for a given BV function, satisfying the
	upper/lower semicontinuity properties in the entire space.
	In the following theorem, we give a positive answer to the open problem. 
	
	\begin{mybox}
		\begin{theorem}
			\label{thm1.1}
			Suppose that $(X,d,\mu)$ is a complete doubling metric measure space supporting a
			$1$-Poincar\'e inequality. Let $f\in  {\BV}(\Omega)$, where $\Omega$ is an
			open subset of $X$. Then for every $\varepsilon>0$, there exist  a function
			$f_\varepsilon$ on $\Omega$ and an open set $U_\varepsilon\subset \Omega$
			such that the following properties hold true:
			\begin{enumerate}
				\item ${\rm Cap}_1(U_\varepsilon)<\varepsilon$;
				\item $\|f-f_\varepsilon\|_{\BV(\Omega)}<\varepsilon$;
				\item  $f^\vee\equiv f_\varepsilon^\vee$ and $f^\wedge\equiv f_\varepsilon^\wedge$ on $\Omega\setminus U_\varepsilon$;
				\item $f_\varepsilon^\vee$ is upper  semicontinuous on $\Omega$, and  $f_\varepsilon^\wedge$ is lower semicontinuous on $\Omega$.
			\end{enumerate} 
		\end{theorem}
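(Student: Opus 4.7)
My plan is to build on the $1$-quasi upper/lower semicontinuity of $f^\vee,f^\wedge$ from \cite{LS17}: that result gives, for any $\delta>0$, an open set $V\subset\Omega$ with ${\rm Cap}_1(V)<\delta$ outside of which $f^\vee|_{\Omega\setminus V}$ is upper semicontinuous and $f^\wedge|_{\Omega\setminus V}$ is lower semicontinuous. I would choose $\delta$ as a small multiple of $\varepsilon$ and use the definition of the $1$-capacity to select a non-negative Newtonian cutoff $\eta\in N^{1,1}(X)$ with $0\le\eta\le 1$, $\eta\equiv 1$ on $V$, and $\|\eta\|_{N^{1,1}(X)}$ comparable to $\delta$. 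The exceptional set $U_\varepsilon$ will be (an open neighbourhood of) $\{\eta=1\}$, immediately giving property (1).

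The modification would be $f_\varepsilon:= f(1-\eta)$, which coincides with $f$ outside $V$ up to the perturbation $f\eta$. Property (3) follows because $\eta<1$ pointwise outside $U_\varepsilon$ and $\eta$ is $1$-quasicontinuous, so the pointwise representatives $f^\vee,f^\wedge$ are unaffected there; a Leibniz-type estimate combined with the smallness of $\|\eta\|_{N^{1,1}}$ yields property (2), namely $\|f-f_\varepsilon\|_{\BV(\Omega)}=\|f\eta\|_{\BV(\Omega)}<\varepsilon$. For unbounded $f$ I would first truncate at level $\pm M$ and absorb the tails via the coarea formula, running $M$ and $\delta$ along a compatible sequence so the capacities telescope.

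The main obstacle is property (4), the \emph{global} semicontinuity of $f_\varepsilon^\vee$ and $f_\varepsilon^\wedge$. On $U_\varepsilon$ these representatives vanish, which is incompatible with USC/LSC at a boundary point $x\in\partial U_\varepsilon\cap(\Omega\setminus U_\varepsilon)$ unless $f^\vee(x)\ge 0$ and $f^\wedge(x)\le 0$. My plan would be to split $f=f_+-f_-$ into positive and negative $\BV$ parts and run the construction separately on each: for $f_+\ge 0$, setting $(f_+)_\varepsilon\equiv 0$ on $U_\varepsilon$ is automatically USC at boundary points because $0\le (f_+)^\vee(x)$, and analogously LSC for $f_-$; then $f_\varepsilon:=(f_+)_\varepsilon-(f_-)_\varepsilon$ inherits both one-sided properties. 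The remaining delicate step is to arrange the cutoffs $\eta$ for $f_+$ and $f_-$ with a common exceptional set, which I expect to require an iterative enlargement at dyadic scales $2^{-n}\varepsilon$ so that the combined capacity remains below $\varepsilon$ while the iterated modifications converge to a single function with the required semicontinuity.
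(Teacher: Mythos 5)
Your construction $f_\varepsilon=f(1-\eta)$ with a single global Newtonian cutoff $\eta$ does not achieve property (4), and the gap is not merely technical. The intuition that multiplying by $(1-\eta)\le 1$ ``dampens'' $f$ near the bad set $V$ and so restores semicontinuity runs into a dichotomy. If $\eta$ is continuous (say Lipschitz), the computation $(f(1-\eta))^\vee=(1-\eta)f^\vee$ (for bounded $f\ge 0$) does give upper semicontinuity; but $\eta$ cannot be continuous when $V$ is \emph{dense} in $\Omega$, since a continuous function equal to $1$ on a dense set is identically $1$, forcing $\supp\eta=\Omega$ and ${\rm Cap}_1(U_\varepsilon)=\infty$. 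And $V$ can indeed be dense: the paper's own motivating example \eqref{eq1.1-15Nov}, with $f=\chi_E$ and $E=\bigcup_jB(q_j,2^{-j})$ for $\{q_j\}$ dense, has $f^\vee$ failing upper semicontinuity at every point of the measure-theoretic exterior $O_E$, so $V$ must contain the dense set $\{q_j\}$. If instead $\eta$ is taken merely quasicontinuous in $N^{1,1}(X)$, then $(f(1-\eta))^\vee$ is no longer $(1-\eta)f^\vee$ at points where $\eta$ is not approximately continuous, and the semicontinuity can break precisely there; moreover, even where $\eta$ behaves well, the only estimate you have is $f_\varepsilon^\vee(y)\le f^\vee(y)$, and chaining with quasi-upper-semicontinuity of $f^\vee$ gives $\limsup_{y\to x}f_\varepsilon^\vee(y)\le f^\vee(x)$, which is strictly weaker than the required $\le f_\varepsilon^\vee(x)$ in the transition region where $\eta$ has intermediate density and $f_\varepsilon^\vee(x)<f^\vee(x)$. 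The decomposition $f=f_+-f_-$ does not rescue this, both because $(f_+)_\varepsilon-(f_-)_\varepsilon$ with the same $\eta$ just reproduces $f(1-\eta)$, and because $(g-h)^\vee$ is in general strictly less than $g^\vee-h^\wedge$, so the one-sided properties of the parts do not directly transfer.

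The missing idea is what the paper calls ``boundary truncation'' combined with a \emph{local} Lipschitz structure. The paper covers the exceptional open set $U$ by a Whitney family $\{B_j\}$ with a Lipschitz partition of unity $\{\varphi_j\}$; since the sum is locally finite, $f_\varepsilon=\sum_j a_j\varphi_j$ is genuinely continuous inside $U$ even when $U$ is dense, which sidesteps the continuity/support dichotomy you run into with a single cutoff. Crucially, each $a_j$ is not an integral average or a naive Lipschitz approximation, but is \emph{truncated} between $m_j$ and $M_j$, the essential infimum and supremum of $f_j$ over $2B_j\setminus(G\cup N_j)$. This truncation caps the constructed values near $\partial U$ by the nearby values of $f$ \emph{outside} the bad set $G$, which is exactly what is needed so that, at $x\in\partial U\cap\Omega$, $\limsup_{U\ni y\to x}f_\varepsilon(y)\le f^\vee(x)$; a multiplicative factor $(1-\eta)\le 1$ bounds $f_\varepsilon$ by $f$ but not by the surrounding ``good'' values, which is why it is insufficient. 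Two further points you should also tighten: for property (3) you need $\eta\equiv 0$ (not merely $\eta<1$) on $\Omega\setminus U_\varepsilon$, together with a density condition such as Lemma~\ref{lem2.2-19Jul} to control $f_\varepsilon^\vee$ on $\partial U_\varepsilon$; and for property (2), the Leibniz term $\int_\Omega\eta^\vee\,d\|Df\|$ is controlled not by $\|\eta\|_{N^{1,1}}$ alone but via absolute continuity of $\|Df\|$ with respect to $1$-capacity (Lemma~\ref{lem:variation measure and capacity}), which is a separate input.
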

	\end{mybox}
	A somewhat similar result 
	was given previously 	in Euclidean spaces
	in \cite[Theorem 6.7]{Gro},
	and in the special case of characteristic functions of sets of  finite perimeter in
	\cite[Theorem 5.2]{Pa18}.
	In the case of Newtonian/Sobolev functions, one usually employs a
	discrete convolution $f_\varepsilon$ defined by
	\[
	f_\varepsilon:= \sum_{j\in\mathbb N}\left(\frac{1}{\mu(B_j)}\int_{B_j}f\,d\mu\right) \varphi_j {\rm \ \ on \ }U_\varepsilon,
	\]
	where $\{B_i\}$ is a covering of $U_\varepsilon$ by balls and $\{ \varphi_i\}$ is a partition of unity associated with the covering. However, when $f$ is only a $\BV$ function,
	the semicontinuity property (4) may fail at points on the boundary of $U_\varepsilon$ which
	are approximate jump points of $f$.
	Therefore, in this
	paper, we modify the above definition of  discrete  convolution and use
	approximating Lipschitz functions and
	``boundary truncation" in place of integral averages to obtain the semicontinuity
	property on the boundary of $ U_\varepsilon$.
	
	Recently, the behavior at infinity of BV functions on complete doubling
	unbounded metric measure spaces supporting a $1$-Poincar\'e inequality was studied by the two
	authors in
	\cite{LN24}. Indeed, for every $f\in \BV(X)$, we have the limits
	\begin{equation}\label{eq1.2-26Nov}\notag
		\text{ $\lim_{t\to\infty}f^\vee(\gamma(t))=\lim_{t\to\infty}f^\wedge(\gamma(t))=0$}
	\end{equation}
	for $1$-almost every infinite curve $\gamma$. 
	Here the notion of $1$-a.e. curve is defined in Subsection \ref{sec2.2}, and a locally rectifiable
	curve $\gamma$ is said to be an infinite curve if $\gamma\setminus B\neq\emptyset$ for every
	ball $B$. For the function $f_\varepsilon$ constructed in Theorem \ref{thm1.1},
	we can obtain the additional property that both $f^\vee_\varepsilon$ and $f^\wedge_\varepsilon$
	have a genuine limit $0$ at infinity. We write
	$\Omega\ni x\to\infty$ to mean that $d(O,x)\to\infty$ for a fixed point $O\in X$ and $x\in\Omega$.
	
	\begin{mybox}
		\begin{corollary}\label{cor1.2}
			Suppose that $(X,d,\mu)$ is a complete doubling unbounded metric measure space
			supporting a $1$-Poincar\'e inequality. Let $f\in \BV(\Omega)$ for a given unbounded
			open subset $\Omega$ of $X$. Then for every $\varepsilon>0$, there exists  a
			function $f_\varepsilon\in \BV(\Omega)$ satisfying the properties given in Theorem
			\ref{thm1.1}, and the limits
			\begin{equation}\label{eq1.2-15Nov}
				\lim_{\Omega\ni x\to\infty}f_\varepsilon^\vee(x)=\lim_{\Omega\ni x\to\infty}f_\varepsilon^\wedge(x)=0
			\end{equation}
			hold true.
		\end{corollary}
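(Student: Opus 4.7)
The plan is to reduce Corollary \ref{cor1.2} to Theorem \ref{thm1.1} by post-multiplying the Lusin approximation of $f$ with a Lipschitz cutoff of $X$ supported near a fixed base point $O\in X$, thereby forcing $f_\varepsilon$ to have compact support and hence the limits \eqref{eq1.2-15Nov} to hold trivially. The delicate point is arranging the exceptional set $U_\varepsilon$ to remain small in $1$-capacity despite the truncation.

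The key preliminary is a lemma: for every $h\in\BV(\Omega)$ and every $\eta>0$ there exist an open set $V\subset\Omega$ and a radius $R_0>0$ with ${\rm Cap}_1(V)<\eta$ such that $h^\vee(x)=h^\wedge(x)=0$ for every $x\in\Omega\setminus V$ with $d(O,x)\ge R_0$. The input is the curve-wise result of \cite{LN24}, namely that $h^\vee(\gamma(t))\to 0$ and $h^\wedge(\gamma(t))\to 0$ as $t\to\infty$ along $1$-a.e.\ infinite curve $\gamma$. I would promote this curve-wise vanishing to a $1$-quasi-everywhere pointwise vanishing via a Fuglede-type comparison: were the lemma to fail, there would exist $\delta>0$ and compact sets $E_k\subset B(O,R_{k+1})\setminus B(O,R_k)$ with $R_k\uparrow\infty$, $|h^\vee|\ge\delta$ on each $E_k$, and $\inf_k{\rm Cap}_1(E_k)>0$; the capacity lower bound would produce a curve family of positive $1$-modulus whose elements thread infinitely many $E_k$, contradicting \cite{LN24}.

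Granted the lemma, the construction proceeds as follows. Apply Theorem \ref{thm1.1} to $f$ with parameter $\varepsilon/2$, yielding $g\in\BV(\Omega)$ and open $U_0\subset\Omega$ satisfying the four listed properties (with $\varepsilon/2$ in place of $\varepsilon$). Apply the lemma to $g$ with $\eta=\varepsilon/2$ to obtain $V$ and $R_0$. Choose $R\ge R_0$ so large that $\|g(1-\eta_R)\|_{\BV(\Omega)}<\varepsilon/2$, where $\eta_R$ is a $(1/R)$-Lipschitz cutoff with $\eta_R\equiv 1$ on $\overline{B(O,R)}$ and $\eta_R\equiv 0$ outside $B(O,2R)$; this is possible since $\|Dg\|$ is a finite Radon measure on $\Omega$ and $g\in L^1(\Omega)$. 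Set $f_\varepsilon:=g\eta_R$ and $U_\varepsilon:=U_0\cup V$, so ${\rm Cap}_1(U_\varepsilon)<\varepsilon$ and $\|f-f_\varepsilon\|_{\BV(\Omega)}<\varepsilon$ by the triangle inequality. Property (3) on $\Omega\setminus U_\varepsilon$ follows by case analysis on $d(O,x)$: if $d(O,x)<R$ then $\eta_R\equiv 1$ near $x$, so $f_\varepsilon\equiv g$ nearby and $f_\varepsilon^\vee(x)=g^\vee(x)=f^\vee(x)$; if $d(O,x)\ge R\ge R_0$ then the lemma gives $g^\vee(x)=g^\wedge(x)=0$, whence $f^\vee(x)=g^\vee(x)=0$ and $(g\eta_R)^\vee(x)=0$ via the continuity and nonnegativity of $\eta_R$. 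Property (4) is inherited: the continuous nonnegative multiplier $\eta_R$ preserves upper semicontinuity of $g^\vee$ (after a sign-decomposition of $g$ into positive and negative parts if needed). The vanishing \eqref{eq1.2-15Nov} is immediate since $f_\varepsilon\equiv 0$ outside $B(O,2R)$.

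The main obstacle is the lemma: the naive weak-type Choquet-capacity estimate ${\rm Cap}_1(\{h^\vee>\delta\})\lesssim\|h\|_{\BV(\Omega)}/\delta$ yields only quasi-uniform convergence $h^\vee\to 0$ at infinity, whereas the \emph{identical vanishing} beyond a finite radius required by property (3) demands the sharper tools of \cite{LN24} together with the modulus-to-capacity argument sketched above. A secondary issue is the careful sign-tracking needed to verify the upper/lower semicontinuity of $f_\varepsilon^\vee$ and $f_\varepsilon^\wedge$ at points where $g$ changes sign and $\eta_R$ takes interior values in $(0,1)$.
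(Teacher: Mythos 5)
Your auxiliary lemma is false, and the gap is fatal. Take $\Omega=X=\mathbb{R}^n$ and any smooth, everywhere positive $h\in\BV(\mathbb{R}^n)$, say $h(x)=(1+|x|)^{-(n+1)}$. Then $h^\vee=h^\wedge=h>0$ at every point, so your lemma would force $V$ to contain $\mathbb{R}^n\setminus B(O,R_0)$, a set of infinite $1$-capacity. Your Fuglede-type argument does not save this: \cite{LN24} gives $\lim_{t\to\infty}h^\vee(\gamma(t))=0$ along $1$-a.e.\ infinite curve, which is perfectly consistent with $h^\vee$ being nowhere zero (as in the example). The negation of the lemma need not produce a level $\delta>0$ on which $h^\vee$ is bounded below along escaping sets -- in the example $h^\vee\to 0$ uniformly at infinity, yet the identical-vanishing conclusion still fails. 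The deeper problem is that compact support for $f_\varepsilon$ is incompatible with property (3) of Theorem~\ref{thm1.1}: if $f_\varepsilon\equiv 0$ outside $B(O,2R)$, then $f^\vee=f_\varepsilon^\vee=0$ would be forced on the complement of $U_\varepsilon\cup B(O,2R)$, which for functions with nowhere-vanishing $f^\vee$ demands $U_\varepsilon$ have infinite capacity.

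The paper's Lemma~\ref{lem4.2-Oct} proves a strictly weaker, and correct, claim: one can modify $f$ to an $h_\varepsilon$ that agrees with $f$ in the sense of the pointwise representatives $f^\vee,f^\wedge$ outside a small-capacity open set $E_\varepsilon$, and that satisfies $|h_\varepsilon|\le b_i$ on the annulus $2\cdot A_i$ for some $b_i\downarrow 0$. This is done by truncating $f$ at levels $\pm d_i$ in each annulus; the sets where truncation actually alters $f$ have small $1$-capacity by a weak-type estimate (from \cite{LS17}) with the $d_i$ chosen so that $\sum_i\|f\|_{\BV(3\cdot A_i)}/d_i$ is small. Crucially $h_\varepsilon$ need not vanish anywhere -- it merely decays. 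To make that decay survive the Whitney-ball construction of Theorem~\ref{thm1.1}, the paper also shows (Lemma~\ref{lem4.4-21Oct}) that the approximating Lipschitz functions $f_j$ in \eqref{eq3.7-Sep} can themselves be truncated at the same levels without breaking the estimates, so the final $f_\varepsilon$ inherits $|f_\varepsilon^\vee|\le b_i$ on $A_i$ and hence \eqref{eq1.2-15Nov}. A cutoff multiplier cannot replicate this, so your reduction does not work.
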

	\end{mybox}

	In \cite{La21}, the non-centered maximal function  of a BV function $f$,
	denoted by $\mathcal Mf$, was shown to be $1$-quasicontinuous in the Euclidean space
	$\mathbb R^d$, with $d\in\N$.
	As noted in \cite[Example 4.5]{La21}, the non-centered maximal function of the
	BV function $f$ defined as in \eqref{eq1.1-15Nov} on the plane is discontinuous at
	all measure-theoretic exterior points $x$ of $E$ (which constitute a set of infinite $2$-dimensional
	Lebesgue measure), because $\mathcal Mf(q_j)= 1$ and $\mathcal Mf(x)<1$ for all $j$ and for all
	such $x$. In Example \ref{example5.4} we will show that in a complete doubling metric measure space
	supporting a $1$-Poincar\'e inequality, the non-centered maximal function of a BV function
	may fail to be $1$-quasicontinuous.
	Moreover, in this example, we necessarily have $f_\varepsilon\equiv f$ in Theorem \ref{thm1.1}.
	
	Due to all of this, it is natural to ask under what conditions the non-centered maximal function
	of a given BV function $f$ on an
	open set $\Omega\subset\mathbb R^d$ is guaranteed to be continuous.
	The upper semicontinuity of $f_\varepsilon^{\vee}$ given in Theorem \ref{thm1.1},
	while seemingly a rather weak property, is essentially what is required to guarantee continuity
	of the maximal function.
	Indeed, we have the following corollary.
	
	\begin{mybox}
		\begin{corollary}\label{cor1.4}
			Let $\Omega\subset\mathbb R^d$ be an arbitrary nonempty open set, with $d\in\mathbb N$.
			Let $f\in \BV(\Omega)$ be nonnegative. Then for every $\varepsilon>0$, there is a function
			$f_\varepsilon\in \BV(\Omega)$ satisfying the properties given in Theorem \ref{thm1.1},
			and such that $\mathcal M_{\Omega} f_\varepsilon$ is continuous in $\Omega$.
		\end{corollary}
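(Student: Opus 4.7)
The plan is to apply Theorem~\ref{thm1.1} to $f$ and then show that the upper semicontinuity of $f_\varepsilon^\vee$, together with the nonnegativity of $f_\varepsilon$, already forces continuity of the non-centered maximal function. A useful preliminary observation is that for a nonnegative $\BV$ function $g$ on an open set $\Omega\subset\mathbb R^d$ the inequality $g^\vee(x)\le\mathcal{M}_\Omega g(x)$ holds at every $x\in\Omega$: at any $x$ with $g^\vee(x)>t$, the super-level set $\{g>t\}$ has positive upper density at $x$, and using the finite-perimeter structure of level sets at reduced boundary points one can find non-centered balls through $x$, contained in $\Omega$, on which the average of $g$ is arbitrarily close to $t$.

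I first apply Theorem~\ref{thm1.1} to obtain $f_\varepsilon$ satisfying the four listed properties. Since $f\ge 0$, replacing $f_\varepsilon$ by its positive part preserves the $\BV$ approximation as well as both semicontinuity properties, because $(\cdot)^\vee$ and $(\cdot)^\wedge$ commute with the pointwise maximum with a constant. I may therefore assume $f_\varepsilon\ge 0$; after first truncating $f$ at a large height (harmless in $\BV(\Omega)$ norm) I may also assume $f_\varepsilon$ is bounded.

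Lower semicontinuity of $\mathcal{M}_\Omega f_\varepsilon$ is automatic, since any open ball $B\subset\Omega$ through $x_0$ is also a ball through every nearby point. For upper semicontinuity at $x_0\in\Omega$, I argue by contradiction: assume there are $\alpha$ and a sequence $x_n\to x_0$ with $\mathcal{M}_\Omega f_\varepsilon(x_n)\ge\alpha>\mathcal{M}_\Omega f_\varepsilon(x_0)$, and pick near-optimal balls $B_n=B(y_n,r_n)\subset\Omega$ through $x_n$ whose average exceeds $\alpha-1/n$. Since $f_\varepsilon^\vee(x_0)\le\mathcal{M}_\Omega f_\varepsilon(x_0)<\alpha$, the upper semicontinuity of $f_\varepsilon^\vee$ lets me choose $v\in(f_\varepsilon^\vee(x_0),\alpha)$ and a neighborhood $V$ of $x_0$ on which $f_\varepsilon^\vee\le v$; since $f_\varepsilon=f_\varepsilon^\vee$ at every Lebesgue point, this gives $f_\varepsilon\le v$ almost everywhere on $V$. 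If $r_n\to 0$ then $B_n\subset V$ eventually, forcing the average over $B_n$ to be at most $v<\alpha$, a contradiction. If $\liminf r_n>0$, I pass to a subsequence with $(y_n,r_n)\to(y_\infty,r_\infty)$ and $r_\infty>0$: then $B_\infty:=B(y_\infty,r_\infty)\subset\Omega$, $x_0\in\overline{B_\infty}$, and dominated convergence gives average at least $\alpha$ over $B_\infty$; the subcase $x_0\in B_\infty$ (interior) immediately contradicts $\mathcal{M}_\Omega f_\varepsilon(x_0)<\alpha$.

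The main obstacle is the remaining subcase $x_0\in\partial B_\infty$, in which $B_\infty$ itself is not admissible for $x_0$. Here one must perturb $B_\infty$ into a nearby $B'\subset\Omega$ containing $x_0$ as an interior point and with average still close to $\alpha$. Because $x_0$ lies in the interior of $\Omega$, a small Euclidean shift of the center of $B_\infty$ toward $x_0$ combined with a small radial enlargement yields such a $B'$ inside $\Omega$. Making this perturbation quantitative — and in particular arranging it so that $B'\subset\Omega$ still holds even if $\overline{B_\infty}$ touches $\partial\Omega$ at points other than $x_0$ — is the only delicate Euclidean step of the argument.
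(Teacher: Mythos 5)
Your overall strategy matches the paper's: reduce continuity of $\mathcal{M}_\Omega f_\varepsilon$ to (i) the pointwise inequality $\mathcal{M}_\Omega f_\varepsilon\ge f_\varepsilon^\vee$ on $\Omega$ and (ii) the fact that upper semicontinuity of $f_\varepsilon^\vee$ together with (i) forces upper semicontinuity of the maximal function, handled by passing to almost-optimal balls and splitting into $r_n\to0$ and $r_n\to r>0$. Both ideas appear verbatim in the paper as Lemma~\ref{lem4.6-14Oct} and Lemma~\ref{lem5.3-5Nov}. However, there are two genuine gaps.

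First, in the subcase $r_n\to r_\infty>0$ with $x_0\in\partial B_\infty$, you explicitly leave unresolved how to produce an admissible ball: shifting the center of $B_\infty$ toward $x_0$ and enlarging the radius produces a ball that can stick out of $\Omega$ at points of $\overline{B_\infty}\cap\partial\Omega$ unrelated to $x_0$. This is not a minor technicality; it is precisely why the paper invokes \eqref{equ4.6-13Oct} (quoting \cite[Lemma 3.1]{La21}), which says the non-centered maximal function is unchanged if one allows open balls $B(z,r)\subset\Omega$ with $x\in\overline{B}(z,r)$ rather than $x\in B(z,r)$. With that fact, the boundary subcase is immediate and no perturbation is needed. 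Without it, your argument does not close; and the perturbation you propose is not obviously true.

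Second, your justification of $\mathcal{M}_\Omega g\ge g^\vee$ for nonnegative BV $g$ --- ``positive upper density of super-level sets plus reduced-boundary structure'' --- ignores the exceptional set $S_f\setminus J_f$ where $f$ is neither approximately continuous nor an approximate jump point; at such $x$ the level sets need not have a reduced-boundary structure, and no ball argument is available. The paper deals with this by first arranging $S_f\setminus J_f\subset G\subset U$, which is possible because $\mathrm{Cap}_1(S_f\setminus J_f)=0$ (\eqref{Sf minus Jf}); then on $U$ the function $f_\varepsilon$ is continuous, so the inequality is trivial, while at points of $\Omega\setminus U$ and $\partial U\cap\Omega$ one uses the Euclidean jump/Lebesgue structure from \eqref{eq4.6-15Oct} and \eqref{eq4.14-19Oct}. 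Moreover on $\partial U\cap\Omega$ one must further compare $f_\varepsilon$ to $f$ in an averaged sense, which is the content of Lemma~\ref{lem4.5-19Oct} and depends on the internal details of the construction of $f_\varepsilon$ (the truncation levels $M_j$, the sets $N_j$, the Whitney structure). Your sketch does not engage with this. Finally, even at genuine jump points the candidate balls that fill one half-ball are only tangent at $x$, so the same closed-ball equivalence \eqref{equ4.6-13Oct} is needed again; your sketch silently assumes it.
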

	\end{mybox}

	\section{Preliminaries}\label{sec:prelis}
	
	Throughout this paper, we adopt the following conventions.
	We work in a metric measure space $(X,d,\mu)$, where $\mu$ is a nontrivial locally finite  Borel regular outer measure on $X$.
	We assume that $X$ consists of at least two points.
	The notation $A\lesssim B\ (A\gtrsim B)$ means that there is a constant $C>0$ depending
	only on the space such that $A\leq C \cdot B\ (A\geq C\cdot B)$, and $A\approx B$
	means that $A\lesssim B$ and $A\gtrsim B$. 
	For every $\mu$-measurable subset $A\subset X$ of strictly positive
	and finite measure, and every $f\in L^1(A)$,
	we let $f_A:=\dashint_Af\,d\mu=\frac{1}{\mu(A)}\int_Af\,d\mu$.
	We denote an open ball centered at $x\in X$ of radius $r>0$ by $B(x,r)$,
	and for $\tau>0$, we set $\tau B(x,r):=B(x,\tau r)$. 
	Given an open set $\Omega\subset X$, we say that $f\in L^1_{\loc}(\Omega)$ if
	for every $x\in \Omega$ there exists $r>0$ such that $f\in L^1(B(x,r))$.
	Other local function spaces are defined similarly.
	
	\subsection{Modulus, capacity and Newtonian spaces}\label{sec2.2}\
	
	A \emph{curve} is a nonconstant continuous mapping from an interval $I\subset \R$ into $X$.
	A curve is \emph{rectifiable} if its length is finite, and \emph{locally rectifiable}
	if its restriction to every compact subinterval of $I$ is rectifiable.
	We assume every rectifiable curve $\gamma$ to be parametrized by arc length
	as $\gamma: [0,\ell_{\gamma}]\to X$, and then the curve integral of a nonnegative Borel
	function $\rho$ is given by
	\[
	\int_{\gamma}\rho\,ds:=\int_0^{\ell_{\gamma}}\rho(\gamma(t))\,dt.
	\]
	If $\gamma$ is locally rectifiable, then we set
	\[
	\int_{\gamma}\rho\,ds:=\sup \int_{\gamma'}\rho\,ds,
	\]
	where the supremum is taken over all rectifiable subcurves $\gamma'$ of $\gamma$.
	
	Let $\Gamma$ be a family of curves in $X$. Let $1\le p<\infty$.
	The  \emph{$p$-modulus} of $\Gamma$ is defined by 
	\[
	\text{\rm Mod}_p(\Gamma):=\inf \int_{X}\rho^p\,d\mu,
	\]
	where the infimum is taken over all Borel functions $\rho:X\to[0,{\infty}]$ satisfying 
	$\int_{\gamma}\rho \,ds\geq 1$ for every locally rectifiable curve $\gamma\in\Gamma$.
	A family of curves is called \textit{$p$-exceptional} if it has $p$-modulus zero. We say
	that a property  holds for \textit{$p$-a.e. curve} (or $p$-almost every curve) if the
	collection of curves for which the property fails  is $p$-exceptional. 
	
	Let $\Omega\subset X$ be open and let $f:\Omega\to [-\infty,\infty]$ be measurable.
	A Borel function $\rho:\Omega\to[0,{\infty}]$ is said to be an \textit{upper gradient} of $f$ in 
	$\Omega$ if 
	\begin{equation}
		\label{def-upper-gradient}|f(x)-f(y)|\leq \int_{\gamma}\rho\, ds
	\end{equation}
	for every rectifiable curve $\gamma$ in $\Omega$ connecting $x$ and $y$. 
	For $1\le p<\infty$, we say that a $\mu$-measurable function $\rho:\Omega\to[0,{\infty}]$
	is a  \textit{$p$-weak upper gradient}
	of $f$ if \eqref{def-upper-gradient} holds for $p$-a.e. rectifiable curve.
	If $f$ has a $p$-weak upper gradient in $\Omega$
	that is in $L_{\loc}^p(\Omega)$, we denote by $g_f$ the
	\textit{minimal $p$-weak upper gradient} of $f$ in $\Omega$,
	which exists and is unique up to sets of measure zero,
	and is minimal in the sense that $g_f\leq\rho $ a.e.\ for every
	$\rho\in L_{\loc}^p(\Omega)$ that is a $p$-weak upper gradient of $f$ in $\Omega$.
	In \cite{H03}, the existence and uniqueness of such a minimal
	$p$-weak upper gradient are given.  The notion of upper gradients is due to Heinonen and Koskela
	\cite{HK98}, and we refer  interested  readers to \cite{BB11,H03,HK98,HKST15,N00} for a more
	detailed discussion. 	
	
	For $1\leq p<\infty$,  the Newtonian space, denoted by $N^{1,p}(\Omega)$, is the collection of
	all $p$-integrable functions on $\Omega$ whose minimal $p$-weak upper gradient is
	also in $L^p(\Omega)$.
	The Newtonian norm 
	is defined by
	\[
	\|f\|_{N^{1,p}(\Omega)}:=\|f\|_{L^p(\Omega)} + \|g_f\|_{L^p(\Omega)}.
	\]
	
	The \textit{$p$-capacity} of a set $K\subset X$ 
	is defined by 
	\[\text{\rm Cap}_p(K):=\inf \left(\int_{X}|u|^p\,d\mu+\int_{X}g_u^p\,d\mu\right),
	\] 
	where the infimum is taken over all functions $u\in N^{1,p}(X)$ such that $u|_K\equiv 1$.
	
	\subsection{Doubling measures and Poincar\'e inequalities}\label{sec-doublingPoincare}\
	
	The Borel regular outer measure $\mu$ is said to be \textit{doubling} if
	there exists a finite constant $C_d\geq 1$ such that for all balls $B(x,r)$
	with radius $r>0$ and center $x\in X$, we have
	\begin{equation}\label{eq2.4-0912}
		0<\mu(B(x,2r))\leq C_d\mu(B(x,r))<\infty.
	\end{equation}
	Here $C_d$ is called the \textit{doubling constant}.
	
	By iterating the doubling condition \eqref{eq2.4-0912}, we obtain a relative lower volume
	decay of order $0<Q<\infty$: there is a constant $0<C_Q<\infty$ such that
	\begin{equation}\label{doubling-dim}
		\left( \frac{r}{R}\right)^Q\leq C_Q \frac{\mu(B(y,r))}{\mu(B(x,R))}
	\end{equation}
	for all $0<r\leq R<\infty$ and $y\in B(x,R)$. The choice $Q=\log_2(C_d)$ works, but a smaller value of $Q$ might satisfy the above condition as well. We may assume that $Q>1$.
	
	Let $1\leq p<{\infty}$. We say that $X$ supports a  \textit{$p$-Poincar\'e inequality}
	if there exist finite constants $C_P>0$ and $\lambda\geq 1$ such that 
	\begin{equation}\label{eq2.5-2711}
		\dashint_{B(x,r)}|f-f_{B(x,r)}|\,d\mu 
		\leq C_P r\left (\dashint_{B(x,\lambda r)}\rho^p\,d\mu\right )^{1/p}
	\end{equation}
	for all balls $B(x,r)$ with radius $r>0$ and center $x\in X$, and for all pairs
	$(f,\rho)$ satisfying \eqref{def-upper-gradient} such that $f$ is integrable on balls.
	Here $\lambda$ is called the {\it scaling constant} or {\it scaling factor}
	of the Poincar\'e inequality,
	and $C_P$ is called the constant of the Poincar\'e inequality.
	For more on Poincar\'e inequalities, we refer the interested reader to \cite{HK95,HK00,Hei01,HKST15}.
	
	We will assume throughout the paper that $X$ is complete, $\mu$ is doubling, and that $X$ supports 
	a $1$-Poincar\'e inequality.
	It is customary to express this by saying that
	$(X,d,\mu)$ is a complete doubling metric measure space supporting a $1$-Poincar\'e inequality.
	
	From \cite[Lemma 3.5]{Pa18}, we have the following lemma.
	\begin{mybox}
		\begin{lemma}\label{lem2.2-19Jul}
			Let $\varepsilon>0$ and let $G\subset X$. 
			Then there exist a finite constant $C_1>0$
			and an open set $U\supset G$ such that 
			${\rm Cap}_1(U)\leq C_1 {\rm Cap}_1(G)+\varepsilon$ and
			\[
			\frac{\mu(B(x,r)\cap G)}{\mu(B(x,r))}\to 0 
			\]
			as $r\to0$ uniformly for all $x\in X\setminus U$.
		\end{lemma}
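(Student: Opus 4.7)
Plan. If $\mathrm{Cap}_1(G) = \infty$, simply set $U:=X$; the uniformity condition is then vacuously satisfied and the capacity bound is trivial. So I assume $\mathrm{Cap}_1(G) < \infty$. The first step is to reduce the problem to an estimate on ball-averages of an almost-optimal capacitary potential: pick $u \in N^{1,1}(X)$ with $0 \leq u \leq 1$, $u \geq 1$ on $G$, and $\|u\|_{N^{1,1}(X)} \leq \mathrm{Cap}_1(G) + \tilde\varepsilon$, for a small $\tilde\varepsilon$ to be fixed. Since $\chi_G \leq u$, I obtain
\[
\frac{\mu(B(x,r)\cap G)}{\mu(B(x,r))} \leq \dashint_{B(x,r)} u\,d\mu
\]
for every $x \in X$ and $r > 0$, so controlling the density of $G$ reduces to controlling the ball-averages of $u$ uniformly.

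For each dyadic level $k \in \mathbb N$ and a scale $r_k > 0$ to be chosen, I would consider the set
\[
U_k := \left\{x \in X : \sup_{0 < r < r_k} \dashint_{B(x,r)} u\,d\mu > 2^{-k}\right\},
\]
which is open by the lower semicontinuity of the restricted maximal function. If $x \notin U_k$, then for every $r < r_k$ the density of $G$ in $B(x,r)$ is at most $2^{-k}$. I would then set $U := \bigcup_k U_k$, which is open. For any $\eta > 0$, choosing $k$ with $2^{-k} < \eta$ gives density $< \eta$ for $r < r_k$ and every $x \notin U$, yielding the desired uniform convergence. Also $U \supset G$, since at every $\mu$-density-1 point $x \in G$ one has $\dashint_{B(x,r)} u\,d\mu \to u^*(x) \geq 1 > 2^{-k}$ for every $k$.

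The main obstacle is the capacity bound $\mathrm{Cap}_1(U) \leq C_1\,\mathrm{Cap}_1(G) + \varepsilon$. A naive Vitali-type covering of $U_k$ by balls $B(x_i,r_i)$ with $r_i < r_k$ and $\dashint_{B(x_i,r_i)} u\,d\mu > 2^{-k}$, combined with the standard estimate $\mathrm{Cap}_1(B(x,r)) \lesssim \mu(B(x,r))/r$ on the $5$-enlargements, produces only $\mathrm{Cap}_1(U_k) \lesssim 2^k\|u\|_{L^1}/r_k$, which is not summable for a universal choice of $r_k$. The heart of the argument is to couple $r_k$ to $k$ by exploiting the Lebesgue differentiation of the Newtonian function $u$ (valid $1$-quasieverywhere) together with the quasicontinuity of the precise representative $u^*$: as $r_k\to 0$, the set $U_k$ collapses, up to a set of arbitrarily small capacity, onto $\{u^* > 2^{-k}\}$, whose capacity is controlled via Chebyshev by $2^k\|u\|_{N^{1,1}}$. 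Telescoping these estimates against the decay of $r_k$, and absorbing the errors into a geometric series summing to $\varepsilon$, is the delicate step where the $1$-Poincar\'e inequality of the space is used essentially. Once this is achieved, the four assertions of the lemma follow immediately from the construction.
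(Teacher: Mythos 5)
Your reduction to ball-averages of a near-optimal capacitary potential $u$ is a natural first move, but as written the scheme cannot close, for a structural reason that the proposed ``telescoping'' cannot repair. The bound $\tfrac{\mu(B(x,r)\cap G)}{\mu(B(x,r))}\le \dashint_{B(x,r)}u\,d\mu$ converges, at $\mathrm{Cap}_1$-q.e.\ point, to $u^*(x)$ -- which is generally \emph{not} $0$ for $x\notin G$. Consequently, any point with $u^*(x)>0$ will, for $k$ large, satisfy $\sup_{0<r<r_k}\dashint_{B(x,r)}u\,d\mu>2^{-k}$ no matter how small you take $r_k>0$, so your set $U=\bigcup_k U_k$ necessarily contains $\{u^*>0\}$ up to a set of capacity zero. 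This set is not controlled by $\mathrm{Cap}_1(G)$: already in $X=\mathbb R$, a near-optimal potential for a singleton can have unbounded support, in which case $\mathrm{Cap}_1(\{u^*>0\})=\infty$. The heuristic ``$U_k$ collapses onto $\{u^*>2^{-k}\}$ with Chebyshev capacity $\lesssim 2^k\|u\|_{N^{1,1}}$'' points you in exactly the wrong direction: the sets $\{u^*>2^{-k}\}$ \emph{increase} to $\{u^*>0\}$, and there is no geometric series to absorb them into $\varepsilon$.

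The missing ingredient is a truncation that kills the limit $u^*(x)$ off a controlled set. Replace $u$ by, say, $v:=\min\{2(u-\tfrac12)_+,1\}$. One still has $\chi_G\le v$ (because $u\ge 1$ on $G$ forces $v=1$ there) and $\|v\|_{N^{1,1}(X)}\le 2\|u\|_{N^{1,1}(X)}$, but crucially $v^*=0$ on $\{u^*\le\tfrac12\}$, so $\{v^*>0\}\subset\{u^*>\tfrac12\}$ has $\mathrm{Cap}_1\lesssim\|u\|_{N^{1,1}(X)}\lesssim\mathrm{Cap}_1(G)+\tilde\varepsilon$ by the capacitary Chebyshev/weak-type bound for the maximal function. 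Now $\dashint_{B(x,r)}v\,d\mu\to v^*(x)=0$ for q.e.\ $x$ outside $\{u^*>\tfrac12\}$, and the convergence can be made \emph{uniform} off an open set of arbitrarily small capacity by the quasi-Egorov/quasi-Lebesgue-point theorem for $N^{1,1}$ functions. Taking $U$ to be (an open neighbourhood of) $\{Mu>\tfrac12\}$ together with that small exceptional set, plus an open cover of the $\mathrm{Cap}_1$-null part of $G$ where $u^*<1$, gives $\mathrm{Cap}_1(U)\le C_1\mathrm{Cap}_1(G)+\varepsilon$ with a universal $C_1$, and the uniform density decay outside $U$. This is the substance of the cited \cite[Lemma 3.5]{Pa18}, and it is the idea your sketch lacks.

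Two smaller remarks. First, your justification of $G\subset U$ addresses only ``$\mu$-density-$1$ points $x\in G$''; since $G$ is an arbitrary subset this is not all of $G$, and you must also enlarge $U$ by a small open set covering the $\mathrm{Cap}_1$-null set where $u^*<1$ or where the Lebesgue averages fail to converge. Second, the base case $\mathrm{Cap}_1(G)=\infty$ is dispatched correctly.
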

	\end{mybox}
	\subsection{Functions of bounded variation}\label{sec2.5}\
	
	In this subsection, we recall the definition and basic properties of functions of bounded variation on metric spaces as in \cite{Mi03}.
	
	Let $\Omega\subset X$ be open and let $f\in L^1_{\rm loc}(\Omega)$. We define the total variation
	of $f$ in $\Omega$ by
	\begin{equation}\label{eq2.7-2711}
		\| Df\|(\Omega):=\inf\left\{ \liminf_{i\to\infty}\int_\Omega g_{f_i}\,d\mu: f_i\in {\rm Lip}_{\rm loc}(\Omega), f_i\to f {\rm\ in\ }L^1_{\rm loc}(\Omega)\right\}
	\end{equation}
	where each $g_{f_i}$ is the minimal $1$-weak upper gradient of $f_i$ in $\Omega$. Here ${\rm Lip}_{\rm loc}(\Omega)$ is the  collection of locally Lipschitz functions on $\Omega$.
	We say that an integrable function $f\in L^1(\Omega)$ is of bounded variation, denoted
	$f\in \BV(\Omega)$, if the total variation  $\|Df\|(\Omega)$ is finite.
	For an arbitrary subset $A\subset X$, we define 
	\[
		\| Df\|(A):=\inf \{\| Df\|(W): A\subset W, \,W\subset X {\rm \ is\ open} \}.
	\]
	By \cite[Theorem 3.4]{Mi03}, $\|Df\|$ is a Radon measure on a given open set $\Omega$ for
	$f\in L^1_{\rm loc}(\Omega)$ with $\|Df\|(\Omega)<\infty$. 
	The $\BV$ norm is defined by
	\[
	\|f\|_{\BV(\Omega)}:=\|f\|_{L^1(\Omega)}+ \|Df\|(\Omega).
	\]
	
	
	From \cite[Lemma 2.2]{KKSL13}, we have the following version of the Poincar\'e inequality.
	\begin{mybox}
		\begin{lemma}\label{lem2.2-Sep}
			Suppose that $(X,d,\mu)$ is a complete doubling metric measure space supporting a $1$-Poincar\'e inequality. Let $f\in \BV(X)$ and let $B$ be a ball so that $\mu(\{x\in B: |f(x)|>0 \})<  c_1 \mu(B)$ for some $0<c_1<1$. Then 
			\[
			\dashint_{B} |f|\,d\mu\leq C_2 \frac{r }{1-c_1^{1/Q}} \frac{\|Df\|(\lambda B)}{\mu(B)},
			\]
			where the constant $C_2>0$ only depends on the constants of the doubling condition and 
			the Poincar\'e inequality. Here $\lambda\geq 1$ is the scaling constant of the
			Poincar\'e inequality, and $Q>1$ is the order of the relative lower volume
			decay \eqref{doubling-dim}.
		\end{lemma}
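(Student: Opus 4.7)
The plan is to derive the estimate directly from the $1$-Poincar\'e inequality for $\BV$ functions, which is available in the current setting: for every $\BV$ function $f$ on $X$ and every ball $B=B(x,r)$,
\[
\dashint_B |f - f_B|\,d\mu \leq C\,r\,\frac{\|Df\|(\lambda B)}{\mu(B)},
\]
with $C$ depending only on the doubling and Poincar\'e data. The remaining task is to bound the mean $|f_B|$ via the support hypothesis.

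Set $E_0:=\{x\in B:f(x)=0\}$. The assumption $\mu(\{x\in B:|f(x)|>0\})<c_1\mu(B)$ yields $\mu(E_0)\geq(1-c_1)\mu(B)$. Because $f\equiv 0$ on $E_0$, we have $|f-f_B|\equiv|f_B|$ on $E_0$, so
\[
|f_B|\,(1-c_1)\mu(B)\leq|f_B|\,\mu(E_0)\leq \int_B|f-f_B|\,d\mu\leq C\,r\,\|Df\|(\lambda B),
\]
which rearranges to $|f_B|\leq \tfrac{C\,r}{1-c_1}\,\tfrac{\|Df\|(\lambda B)}{\mu(B)}$. The triangle inequality then gives
\[
\dashint_B|f|\,d\mu\leq \dashint_B|f-f_B|\,d\mu+|f_B|\leq \frac{2C\,r}{1-c_1}\,\frac{\|Df\|(\lambda B)}{\mu(B)}.
\]

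To match the stated form of the lemma, I use the elementary inequality $c_1\leq c_1^{1/Q}$ for $c_1\in[0,1]$ and $Q\geq 1$ (so $s^\alpha\geq s$ whenever $s\in[0,1]$ and $\alpha\in[0,1]$), which gives $1-c_1\geq 1-c_1^{1/Q}$ and hence $(1-c_1)^{-1}\leq(1-c_1^{1/Q})^{-1}$. Taking $C_2:=2C$, this produces
\[
\dashint_B|f|\,d\mu\leq \frac{C_2\,r}{1-c_1^{1/Q}}\,\frac{\|Df\|(\lambda B)}{\mu(B)},
\]
as claimed.

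The principal ingredient is the $\BV$ version of the $1$-Poincar\'e inequality on balls, which is classical in this metric setting (it follows from the definition of $\BV$ via Lipschitz approximation together with the $1$-Poincar\'e inequality for the approximants). The rest of the argument, namely exploiting the support condition by integrating $|f-f_B|$ over the large zero set $E_0$ to pin down $|f_B|$, is elementary. There is no substantive obstacle; in fact my argument yields the stronger constant $\tfrac{2C}{1-c_1}$, and the lemma's weaker shape $\tfrac{C_2}{1-c_1^{1/Q}}$ is simply the form most conveniently used in the sequel.
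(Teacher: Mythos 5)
Your proof is correct, and it is self-contained; note, however, that the paper does not prove this lemma at all but simply quotes it from \cite[Lemma 2.2]{KKSL13}, so there is no in-paper proof to compare against. Your route---apply the $\BV$ Poincar\'e inequality on $B$, then use the large zero set $E_0=\{f=0\}\cap B$ to pin down $|f_B|$ via $|f_B|\,\mu(E_0)=\int_{E_0}|f-f_B|\,d\mu\le\int_B|f-f_B|\,d\mu$, and conclude by the triangle inequality---is elementary and works exactly as you wrote. Two small observations. First, your argument actually produces the sharper factor $(1-c_1)^{-1}$; since $0<1/Q<1$ and $c_1\in(0,1)$ imply $c_1^{1/Q}\ge c_1$ and hence $(1-c_1)^{-1}\le(1-c_1^{1/Q})^{-1}$, your bound is strictly stronger than the one stated. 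The weaker form with $1-c_1^{1/Q}$ in the denominator is the one coming from the chaining/density argument in the cited reference, where the relative lower volume decay exponent $Q$ enters naturally; your direct ``bound the mean via the zero set'' argument bypasses that and needs no appeal to the lower volume decay at all. Second, when you invoke the $\BV$ Poincar\'e inequality you should note that the passage from the Lipschitz version \eqref{eq2.5-2711} to $\BV$ uses the approximating sequence from the definition \eqref{eq2.7-2711} together with the fact that $\Vert Df\Vert$ is a Radon measure; this yields $\dashint_B|f-f_B|\,d\mu\le C\,r\,\Vert Df\Vert(\lambda B)/\mu(\lambda B)$, which you then (harmlessly) weaken to the form with $\mu(B)$ in the denominator to match the lemma. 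With these remarks in place, your proof stands.
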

	\end{mybox}

	A $\mu$-measurable set $E\subset X$ is said to be of finite perimeter if $\|D\chi_E\|(X)<\infty$,
	where $\chi_E$ is the characteristic function of $E$. The perimeter of $E$
	in $A\subset X$ is denoted by 
	\[
	P(E,A):=\|D\chi_E\|(A).
	\]
	
	We have the following coarea formula from \cite[Proposition 4.2]{Mi03}:
	if $f\in \BV(\Omega)$ for $\Omega\subset X$ open and $F\subset \Omega$ is a Borel set, then
	\begin{equation}\label{eq2.8-Oct}
		\|Df\|(F)=\int_{-\infty}^{\infty}P(\{f>t\}, F)\,dt.
	\end{equation}

	The measure-theoretic interior $I_E$ and exterior $O_E$ of a set $E\subset X$ are defined respectively by
	\[
	I_E:=\left\{x\in X:  \lim_{r\to 0} \frac{\mu(B(x,r)\setminus E)}{\mu(B(x,r))}=0 \right\}
	\]
	and
	\begin{equation}\label{eq2.7-15Nov}
		O_E:=\left\{x\in X: \lim_{r\to 0} \frac{\mu(B(x,r)\cap E)}{\mu(B(x,r))}=0 \right\}.
	\end{equation}
	The measure-theoretic boundary $\partial^*E$ is defined as the set of points $x\in X$ at which both $E$ and its complement have strictly positive upper density, i.e.
	\[
	\limsup_{r\to 0} \frac{\mu(B(x,r)\setminus E)}{\mu(B(x,r))}>0\ \ {\rm and\ \ }  \limsup_{r\to 0} \frac{\mu(B(x,r)\cap E)}{\mu(B(x,r))}>0.
	\]
	Then $X=I_E\cup O_E\cup\partial^*E$.
	
	We define the  \emph{codimension $1$ Hausdorff content} of a set $A\subset X$, for  $0<R\le \infty$,
	as follows:
	\[
	{\mathcal H}_R(A):=\inf\left\{\sum_{k\in\mathbb N}\frac{\mu(B_k)}{r_k}:A\subset\bigcup_{k\in\mathbb N}B_k\ \ \text{and}\ \ 0<r_k<R\right\}.
	\]
	The \emph{codimension $1$ Hausdorff measure} is then defined by 
	\begin{equation}\label{2.7-8thMarch}\notag
		{\mathcal H}(A):=\lim_{R\to 0}{\mathcal H}_R(A)=\sup_{R>0} {\mathcal H}_R(A).
	\end{equation}
	

	Given an open set $\Omega\subset X$ and a $\mu$-measurable subset $E\subset X$ with $P(E,\Omega)<\infty$, we have that for any Borel set $A\subset \Omega$,
	\begin{equation} \label{eq2.5-0411}
		P(E,A)=\int_{\partial^*E\cap A}\theta_E\,d\mathcal H
	\end{equation}
	with $\theta_E:X\to[\alpha,C_d]$, where $C_d$ is the doubling constant and
	$\alpha>0$ depends only on
	the doubling constant and constants of the $1$-Poincar\'e inequality,
	see \cite[Theorem 5.3 and Theorem 5.4]{Am02}
	and \cite[Therem 4.6]{AMP04}.
	
	The lower and upper approximate limits of a function $f$  on $X$ are defined respectively by 
	\begin{equation}\label{eq2.6-1511}\notag
		f^\wedge(x):= \sup\left\{ t\in\mathbb R: \lim_{r\to 0}\frac{\mu(\{y\in B(x,r): f(y)<t\}) }{\mu(B(x,r))}=0 \right\}
	\end{equation}
	and
	\begin{equation}\label{equ2.12-0912}\notag
		f^\vee(x):= \inf\left\{ t\in\mathbb R: \lim_{r\to 0}\frac{\mu( \{y\in B(x,r): f(y)>t\}) }{\mu(B(x,r))}=0 \right\}
	\end{equation}
	for $x\in X$.
	When studying the fine properties of functions of bounded variation, we consider the pointwise representatives $f^\vee$ and $f^\wedge$. 
	
		
		
		Given an open set $\Omega\subset X$, we say that a function $f$ is $1$-quasi
		(lower/upper semi-) continuous on $\Omega$ if for every $\varepsilon>0$ there exists
		an open set $G\subset X$ such that ${\rm Cap}_1(G)<\varepsilon$ and
		$f|_{\Omega\setminus G}$ is finite and (lower/upper semi-) continuous.
		
		By Corollary 4.2 in \cite{Pa18}, or Theorem 1.1 in \cite{LS17}, we obtain that
		BV functions have a $1$-quasi semicontinuity property as below.
		\begin{mybox}
			\begin{proposition}\label{prop2.2-28Sep} 
				Let  $\Omega\subset X$ be open and let $f\in \BV(\Omega)$.
				Then $f^\wedge$ is $1$-quasi lower semicontinuous on $\Omega$
				and $f^\vee$ is $1$-quasi upper semicontinuous on $\Omega$.
			\end{proposition}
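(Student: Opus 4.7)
The plan is to prove only the upper semicontinuity statement for $f^\vee$; the statement for $f^\wedge$ then follows by applying the result to $-f$ and noting that $(-f)^\vee=-f^\wedge$. Fix $\varepsilon>0$. The strategy is to stratify $f$ through its super-level sets $\{f>t\}$ using the coarea formula \eqref{eq2.8-Oct}, and to combine this with Lemma \ref{lem2.2-19Jul} to bound the size of the exceptional set in $1$-capacity.

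First, I would fix a countable dense sequence $\{t_k\}_{k\in\N}\subset\R$ of levels for which $P(\{f>t_k\},\Omega)<\infty$; this is possible because by \eqref{eq2.8-Oct} only a Lebesgue-null set of $t$ may fail this property. The basic observation is the inclusion $\{f^\vee>t\}\subset I_{\{f>t\}}\cup\partial^*\{f>t\}$, so controlling upper semicontinuity of $f^\vee$ reduces to controlling how the measure-theoretic exterior $O_{\{f>t\}}$ behaves under small perturbations of the base point, for $t$ slightly above $f^\vee(x)$.

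Second, for each $k$ I would apply Lemma \ref{lem2.2-19Jul} to the set $\partial^*\{f>t_k\}$ (whose $1$-capacity is controlled by its finite codimension-$1$ Hausdorff measure via \eqref{eq2.5-0411}) with tolerance $\varepsilon 2^{-k}$, producing an open set $U_k\supset\partial^*\{f>t_k\}$ of capacity less than $\varepsilon 2^{-k}$ (up to the absolute constant from the lemma) on which the relative density
\[
\frac{\mu(B(x,r)\cap\partial^*\{f>t_k\})}{\mu(B(x,r))}
\]
tends to $0$ uniformly in $x\in X\setminus U_k$ as $r\to 0$. Setting $G:=\bigcup_k U_k$, subadditivity of the capacity yields ${\rm Cap}_1(G)\lesssim\varepsilon$.

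Finally, I would verify that $f^\vee|_{\Omega\setminus G}$ is upper semicontinuous. Given $x\in\Omega\setminus G$ and a level $t_k>f^\vee(x)$ from the dense sequence, $x$ lies in $O_{\{f>t_k\}}$; using Lemma \ref{lem2.2-Sep} together with the uniform density control on $\Omega\setminus U_k$, this exterior property propagates to every nearby $y\in\Omega\setminus G$, so $f^\vee(y)\leq t_k$. Letting $t_k\downarrow f^\vee(x)$ along the dense sequence yields the desired upper semicontinuity. The \emph{main obstacle} is precisely this propagation step: one must convert the pointwise density vanishing (controlled uniformly on $X\setminus U_k$) into a genuine local stability of the measure-theoretic exterior near $x$, which is where the $1$-Poincar\'e inequality, delivered through Lemma \ref{lem2.2-Sep}, does the essential work.
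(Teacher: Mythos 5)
The proposal has a genuine gap at the capacity estimate. Lemma~\ref{lem2.2-19Jul} applied to $\partial^*\{f>t_k\}$ yields an open set $U_k\supset\partial^*\{f>t_k\}$ with
\[
{\rm Cap}_1(U_k)\le C_1\,{\rm Cap}_1(\partial^*\{f>t_k\})+\varepsilon 2^{-k},
\]
and the first term is not small. You are conflating ``finite'' with ``small'': a set of finite perimeter has $\partial^*\{f>t\}$ of \emph{finite} codimension-$1$ Hausdorff measure, which via \eqref{eq2.5-0411} (and \cite{HK10}) controls its $1$-capacity to be \emph{finite}, not arbitrarily small. In fact ${\rm Cap}_1(\partial^*\{f>t_k\})$ is typically bounded below by a fixed multiple of $P(\{f>t_k\},\Omega)$, which for a dense sequence $t_k$ does not tend to $0$. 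Hence $G=\bigcup_k U_k$ need not have ${\rm Cap}_1(G)\lesssim\varepsilon$, and the whole construction collapses. You also have a secondary, unargued step: even granting small capacity for $U_k$, the ``propagation'' of the exterior property from $x$ to nearby $y\notin G$ is precisely the non-trivial content of quasi-openness of $O_{\{f>t_k\}}$, and invoking Lemma~\ref{lem2.2-Sep} in passing does not supply it. Finally, note that the paper does not prove Proposition~\ref{prop2.2-28Sep}: it imports it directly from \cite[Theorem~1.1]{LS17} (or \cite[Corollary~4.2]{Pa18}), where the argument instead selects only a small-$\mathcal{H}$-measure piece of the jump set $S_f$ (together with non-Lebesgue points), thickens \emph{that} piece to an open set of small capacity, and then uses a maximal-function/Poincar\'e estimate to control densities on the complement. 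This is a fundamentally different decomposition from the level-set stratification you propose, and it is what makes the capacity bound attainable.
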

		\end{mybox}
		\subsection{Whitney decomposition} \label{sec2.4}\
		
		Recall that we assume $(X,d,\mu)$ to be a doubling metric measure space supporting a $1$-Poincar\'e inequality, where
		$\lambda$ is  the scaling  factor of the $1$-Poincar\'e inequality \eqref{eq2.5-2711}.
		Given $x\in X$ and $A\subset X$, we denote
		\[
		\dist(x,A):=\inf\{d(x,y)\colon y\in A\}.
		\]
		Let $U\subset X$ be open. We can choose a Whitney-type covering 
		of $U$ as follows. See e.g. \cite[Proposition 4.1.15]{HKST15} for a similar construction; this is based on
		earlier works, e.g. \cite{CW71}.
		We have a Whitney-type covering $\{ B_j:=B(x_j,r_j)\}_{j\in\mathbb N}$ of $U$ such that 
		\begin{enumerate}
			\item for each $j\in\mathbb N,$ 
			\[
			r_j=\min\left\{1,\frac{{\rm dist}(x_j,X\setminus U)}{8\lambda}\right\};
			\]
			\item for each $k\in\mathbb N$, the ball $2\lambda B_k$ meets at most $C_0:=C_0(C_d,\lambda)$ balls $2\lambda B_j$ (that is, a bounded overlap property holds);
			\item if $2\lambda B_j$ meets $2\lambda B_k$, then $r_j\leq 2r_k$.
		\end{enumerate}
		Given such a covering of $U$, we can take a partition of unity $\{ \varphi_j\}_{j\in\mathbb N}$ subordinate to the
		covering, such that $0\leq \varphi_j\leq 1$, each $\varphi_j$ is a $C_W/r_j$-Lipschitz function for some constant
		$C_W\geq 1$, and $\supp \varphi_j \subset 2B_j$ for each $j\in\mathbb N$, see e.g.
		\cite[p. 104]{HKST15} or \cite{BBS07}. 
		
		
		\subsection{Functions of bounded variation in $\mathbb R^d$}\ 
		
		In this subsection, we consider the Euclidean theory that will be needed
		(only) in Section \ref{sec:Euclidean}. Consider the  $d$-dimensional Euclidean space
		$\mathbb R^d=(\mathbb R^d,|\cdot|,\mathcal L^d)$ equipped with the Euclidean distance $|\cdot|$ and the $d$-dimensional Lebesgue measure $\mathcal L^d$, with $d\in\mathbb N$.
		We follow the monograph \cite{AFP00}.
		We always let $\Omega\subset \mathbb R^d$ be an open set.
		We let the class $\BV(\Omega)$ be defined as in Subsection \ref{sec2.5}.
		
		Let $f\in L_{\loc}^1 (\Omega)$. 
		For $x\in \Omega$, we say that $\widetilde{f}(x)\in\mathbb R$ is a Lebesgue
		representative of $f$ at $x$ if
		\begin{equation}\label{def-Leb}
			\lim_{r\to0}\,\dashint_{B(x,r)}|f(y)-\widetilde{f}(x)|\,dy=0.
		\end{equation}
		We denote by $S_f\subset \Omega$ the set where such a representative does not exist
		and call it the approximate
		discontinuity set or the set of non-Lebesgue points.
		Given $x\in \Omega$, $r>0$, and a point on the unit sphere $\nu\in\mathbb S^{d-1}$, we define the half-balls
		\[
		B_\nu^+(x,r):=\{ y\in B(x,r): \langle y-x, \nu\rangle>0\} {\rm \ \ and\ \ }B_\nu^-(x,r):=\{ y\in B(x,r): \langle y-x,\nu\rangle<0\}.
		\]
		Here $\langle \cdot, \cdot\rangle$ denotes the Euclidean inner product.
		We say that $x\in \Omega$ is an approximate jump point of $f$ if there is $\nu\in\mathbb S^{d-1}$ and distinct numbers $f^+(x), f^-(x)\in\mathbb R$ such that 
		\begin{equation}\label{def-f+}
			\lim_{r\to0}\,\dashint_{B_\nu^+(x,r)}|f(y)-f^+(x)|\,dy=0
		\end{equation}
		and
		\begin{equation}\label{def-f-}
			\lim_{r\to0}\,\dashint_{B_\nu^-(x,r)}|f(y)-f^-(x)|\,dy=0.
		\end{equation}
		The set of all approximate jump points is denoted by $J_f\subset\Omega$. By \cite[Theorem 3.78]{AFP00},
		we know that $\mathcal H^{d-1}(S_f\setminus  J_f)=0$, where $\mathcal H^{d-1}$ is
		the $(d-1)$-dimensional Hausdorff measure on $\mathbb R^d$. By \cite[Theorem 4.3 and Theorem 5.1]{HK10},
		it follows that
		\begin{equation}\label{Sf minus Jf}
			{\rm Cap}_1(S_f\setminus J_f)=0.
		\end{equation}
		Note that for all $x\in \Omega\setminus S_f$,
		we have $\widetilde{f}(x)=f^\vee(x)=f^\wedge(x)$ where $f^\vee$ and $f^\wedge$
		are respectively the upper and
		lower approximate limits of $f$ defined as in Subsection \ref{sec2.5}.
		Furthermore, for all $x\in J_f$,
		we have that $f^\vee(x)=\max\{ f^-(x), f^+(x)\}$ and $f^\wedge(x)= \min\{ f^-(x), f^+(x)\}$.
		In total,
		\begin{equation}\label{eq4.6-15Oct}
			\begin{cases}
				f^\vee(x)=\max\{f^-(x), f^+(x), \widetilde{f}(x) \}\\
				f^\wedge(x)=\min\{f^-(x), f^+(x), \widetilde{f}(x) \} 
			\end{cases}
			{\rm \ \ for\ all\ }x\in (\Omega\setminus S_f) \cup J_f.
		\end{equation}
		(The number $\widetilde{f}(x)$ is only defined for $x\in \Omega\setminus S_f$ and
		the numbers $f^-(x), f^+(x)$ are only defined for $x\in J_f$, but the above maximum and minimum
		have the obvious interpretations.)
		The non-centered maximal function of a function $f\in L^1_{\loc}(\Omega)$ is defined by 
		\[
		\mathcal M_{\Omega}f(x):=\sup_{x\in B(z,r)\subset \Omega}\,\dashint_{B(z,r)}|f| \,dy,\quad x\in\Omega.
		\]
		If $\Omega=X$, we omit it.
		By \cite[Lemma 3.1]{La21}, we have that for $x\in \Omega$, 
		\begin{equation} \label{equ4.6-13Oct}
			\mathcal M_{\Omega} f(x)=\sup_{x\in \overline{B}(z,r),\, B(z,r)\subset \Omega}\,\dashint_{B(z,r)}|f|\,dy.
		\end{equation}
		Here $\overline{B}(z,r)$ denotes the closed ball.
		
		\section{Proof of Theorem \ref{thm1.1}} \label{sec3}
		
		This section is devoted to proving our main Theorem \ref{thm1.1}.
		Recall that we assume that $(X,d,\mu)$ is a complete doubling metric measure space  supporting 
		a $1$-Poincar\'e inequality.
		
		We recall also that $C_d\geq1, \lambda\geq 1$ are, respectively, the doubling constant and the
		scaling factor of the $1$-Poincar\'e inequality \eqref{eq2.5-2711}. The constant of the relative
		lower volume decay \eqref{doubling-dim}  is $C_Q>0$, with $Q>1$.
		
		Let $\varepsilon>0$ be arbitrary; without loss of generality we can assume that
		$\varepsilon \le 1$. We consider $f\in {\BV}(\Omega)$, where $\Omega\subset X$ is open.
		By Proposition \ref{prop2.2-28Sep},
		there is
		an open subset $G$ of $\Omega$ with ${\rm Cap}_1(G)<\varepsilon/C_1$, where 
		$C_1$ is the constant from Lemma \ref{lem2.2-19Jul}, and
		so that $f^\vee|_{\Omega\setminus G}$ and $f^\wedge|_{\Omega\setminus G}$ are respectively upper and lower semicontinuous.
		
		By Lemma \ref{lem2.2-19Jul}, we find $0<R<1$ and an open set $U'$ with
		$G\subset U'\subset X$ and
		\begin{equation}\notag
			{\rm Cap}_1(U') < \varepsilon
			\ \
			{\rm    and   }
			\ \     \frac{\mu(B(x,r)\cap G)}{\mu(B(x,r))}<\min\left\{1,\frac{1}{(20 \lambda)^QC_Q}\right\}
		\end{equation}
		for  all $x\in X\setminus U'$ and $0<r<R<1$.
		Having chosen $\varepsilon$ smaller if necessary,
		which we can do without loss of generality, we can also assume that $X\setminus U'$ is nonempty.
		
		The variation measure is absolutely continuous with respect to the $1$-capacity in the following sense.
		\begin{mybox}
			\begin{lemma}[{\cite[Lemma 3.8]{Pa18}}]\label{lem:variation measure and capacity}
				Let $\Omega\subset X$ be an open set and
				let $f\in L^1_{\rm loc}(\Omega)$ with $\Vert Df\Vert(\Omega)<\infty$. Then for every
				$\varepsilon'>0$ there exists $\delta>0$ such that if $A\subset \Omega$ with ${\rm Cap}_1 (A)<\delta$,
				then $\Vert Df\Vert(A)<\varepsilon'$.
			\end{lemma}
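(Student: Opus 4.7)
The plan is to combine the coarea formula, the representation of the perimeter via the codimension-$1$ Hausdorff measure, and the qualitative compatibility between $\mathcal H$ and ${\rm Cap}_1$ in the doubling $1$-Poincar\'e setting.

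Given $\varepsilon'>0$, first reduce to a bounded $\BV$ function. By \eqref{eq2.8-Oct} applied to $f$, the map $t\mapsto P(\{f>t\},\Omega)$ lies in $L^1(\mathbb R)$, so we may choose $M>0$ with $\int_{|t|\ge M}P(\{f>t\},\Omega)\,dt<\varepsilon'/2$. Let $f_M:=\max(-M,\min(f,M))$. The identities $\{f_M>t\}=\{f>t\}$ for $t\in(-M,M)$ together with $\{f_M>t\}\in\{\emptyset,X\}$ for $|t|>M$ imply, via coarea applied to both $f$ and $f_M$, that $\|Df\|(A)\le\|Df_M\|(A)+\varepsilon'/2$ for every $A\subset\Omega$. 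It therefore suffices to prove uniform absolute continuity of $\|Df_M\|$ with respect to ${\rm Cap}_1$, with tolerance $\varepsilon'/2$.

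Next, apply coarea to $f_M$ together with the representation \eqref{eq2.5-0411}:
\[
\|Df_M\|(A)\le C_d\int_{-M}^{M}\mathcal H(\partial^*\{f_M>t\}\cap A)\,dt=:C_d\,\nu(A),
\]
where $\nu$ is a Borel measure on $\Omega$ of finite total mass bounded by $\|Df_M\|(\Omega)/\alpha$, again by \eqref{eq2.5-0411}. The key remaining input is the qualitative implication ${\rm Cap}_1(A)=0\Rightarrow\mathcal H(A)=0$, which is standard in this setting and gives $\nu\ll{\rm Cap}_1$. Since $\nu$ is finite and ${\rm Cap}_1$ is countably subadditive, a Borel--Cantelli argument upgrades this qualitative fact to the required uniform $\varepsilon'$--$\delta$ statement: if no such $\delta$ existed, there would be sets $A_n\subset\Omega$ with ${\rm Cap}_1(A_n)\le 2^{-n}$ yet $\nu(A_n)\ge c>0$; setting $B_n:=\bigcup_{k\ge n}A_k$ and using subadditivity gives ${\rm Cap}_1(\bigcap_n B_n)=0$, hence $\nu(\bigcap_n B_n)=0$, contradicting $\nu(B_n)\ge c$ together with continuity of $\nu$ from above.

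The main obstacle is the qualitative compatibility ${\rm Cap}_1(A)=0\Rightarrow\mathcal H(A)=0$, which is nontrivial in the abstract metric setting and rests crucially on the $1$-modulus characterization of the $1$-capacity together with the doubling and $1$-Poincar\'e hypotheses. Once this input is in hand, the truncation step, the coarea representation, and the measure-theoretic upgrade combine routinely to produce the stated uniform bound.
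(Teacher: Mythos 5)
Your overall strategy is sound: you correctly identify the three key ingredients, namely the coarea formula \eqref{eq2.8-Oct}, the Hausdorff representation \eqref{eq2.5-0411} of perimeter, and the vanishing implication ${\rm Cap}_1(A)=0\Rightarrow\mathcal H(A)=0$ (which in this setting comes from \cite{HK10}, the same source the paper invokes for \eqref{Sf minus Jf}). The contradiction/Borel--Cantelli argument using countable subadditivity of ${\rm Cap}_1$ and continuity from above of a finite measure is exactly the standard way to upgrade qualitative absolute continuity of a finite Radon measure with respect to a capacity to the uniform $\varepsilon'$--$\delta$ form, and it is valid here. Since the paper simply cites \cite[Lemma 3.8]{Pa18} without reproducing the argument, this is a reasonable reconstruction and is in the same spirit as the cited proof.

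That said, the proposal carries unnecessary baggage. The truncation to $f_M$ and the introduction of the auxiliary measure $\nu$ buy you nothing. The variation measure $\|Df\|$ is itself a finite Radon measure on $\Omega$ by hypothesis, and the qualitative implication ${\rm Cap}_1(A)=0\Rightarrow\|Df\|(A)=0$ for Borel $A$ follows in one line from coarea plus \eqref{eq2.5-0411}: if ${\rm Cap}_1(A)=0$ then $\mathcal H(A)=0$, hence $\mathcal H(\partial^*\{f>t\}\cap A)=0$ for every $t$, hence $P(\{f>t\},A)=0$ for every $t$, hence $\|Df\|(A)=\int_{\R}P(\{f>t\},A)\,dt=0$. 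Applying your contradiction argument directly to $\|Df\|$ then finishes the proof. The detour through $f_M$ and $\nu$ (which is, up to the constants $\alpha,C_d$, just $\|Df_M\|$ in disguise) obscures this. Two technical points you should at least acknowledge: to run the Borel--Cantelli step one needs countable subadditivity of ${\rm Cap}_1$, which is nontrivial but standard in this setting; and to pass from Borel sets to arbitrary $A\subset\Omega$ one uses outer regularity of ${\rm Cap}_1$, together with the fact that $\|Df\|$ of an arbitrary set is defined as an infimum over open supersets, so it suffices to treat open $A$.
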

		\end{mybox}
		By this lemma, we may also assume that
		$\|Df\|(U'\cap \Omega)<\varepsilon$. Then let $U:=U'\cap \Omega$.
		We gather all of the known properties:
		\begin{equation}\label{equ3.1-28Sep}
			{\rm Cap}_1(U)<\varepsilon, \ \ \|Df\|(U)<\varepsilon, \ \ 
			\frac{\mu(B(x,r)\cap G)}{\mu(B(x,r))}<\min\left\{1,\frac{1}{(20\lambda)^Q C_Q}\right\}
		\end{equation}
		for  all $x\in \Omega\setminus U$ and $0<r<R<1$,
		and
		\begin{equation}\label{equ3.2-28Sep}
			f^\vee|_{\Omega\setminus G}\ \textrm{ and }\ f^\wedge|_{\Omega\setminus G}
			\quad \textrm{are respectively upper and lower semicontinuous.}
		\end{equation}
		Let $\{ B_j:=B(x_j,r_j)\}_{j\in\mathbb N}$ and $\{\varphi_j\}_{j\in\mathbb N}$ be the Whitney balls  and the partition of unity defined as in Subsection \ref{sec2.4}. 
		Let
		\begin{equation}\label{equa3.6-Sep}
			U_1:= \left\{ x\in U: \dist(x, \Omega\setminus U)< 
			\frac{\min\{ R, {\rm dist}(x, X\setminus \Omega)\}}{100\lambda}\right\}
		\end{equation}
		and $U_2:=U\setminus U_1$.
		Let
		$J_1:= \{j\in\mathbb N: x_j\in U_1\} $
		and $J_2:=\mathbb N\setminus J_1$.
		\begin{mybox}
			\begin{lemma}\label{lem3.1-28Sep}
				For all $j\in J_1$, we have
				\[
					\mu(B_j\cap G)
					\leq \frac 12 \mu(B_j).
				\]
			\end{lemma}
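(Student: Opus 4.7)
The strategy is to use the density bound \eqref{equ3.1-28Sep} at a point $y\in\Omega\setminus U$ close to $x_j$, applied to a slight enlargement of $B_j$, and then to pay for the enlargement via the relative lower volume decay \eqref{doubling-dim}. The constants in \eqref{equ3.1-28Sep} were chosen precisely so that this trade-off closes with a factor of $2^{-Q}$ to spare.

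Fix $j\in J_1$, so $x_j\in U_1$. By \eqref{equa3.6-Sep}, both $\dist(x_j,\Omega\setminus U)<R/(100\lambda)$ and $\dist(x_j,\Omega\setminus U)<\dist(x_j,X\setminus\Omega)/(100\lambda)$ hold. Writing $X\setminus U=(X\setminus\Omega)\cup(\Omega\setminus U)$, the second inequality forces $\dist(x_j,X\setminus U)=\dist(x_j,\Omega\setminus U)$, and this quantity is bounded by $R/(100\lambda)<1$, so the Whitney radius simplifies to $r_j=\dist(x_j,\Omega\setminus U)/(8\lambda)<R/(800\lambda^2)$. Pick any $y\in\Omega\setminus U$ with $d(x_j,y)<9\lambda r_j$ and set $s:=10\lambda r_j$. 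Then $B_j\subset B(y,s)$ and $s<R/(80\lambda)<R<1$, so applying \eqref{equ3.1-28Sep} at $(y,s)$ gives
\[ \mu(B(y,s)\cap G)<\frac{\mu(B(y,s))}{(20\lambda)^Q C_Q}. \]
Since $x_j\in B(y,s)$ with $r_j\le s$, the relative lower volume decay \eqref{doubling-dim} yields $\mu(B(y,s))\le C_Q(10\lambda)^Q\mu(B_j)$. Combining these with the monotonicity $\mu(B_j\cap G)\le\mu(B(y,s)\cap G)$ produces $\mu(B_j\cap G)\le 2^{-Q}\mu(B_j)\le\tfrac12\mu(B_j)$, which is the claim.

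The only delicate point is the choice of the enlargement factor: it must exceed $9\lambda+1$ so that $B_j\subset B(y,s)$ and remain below $20\lambda/2^{1/Q}$ so that the volume loss $(s/r_j)^Q C_Q$ is absorbed by $(20\lambda)^Q C_Q$; the factor $10\lambda$ comfortably meets both requirements (the upper bound uses $Q>1$). The hypothesis $x_j\in U_1$ is used precisely to ensure that $\Omega\setminus U$ sits at distance comparable to $r_j$ from $x_j$, so that this enlargement is by a bounded multiplicative factor; away from $U_1$ (i.e.\ for $j\in J_2$) this comparison fails, which is why the lemma is restricted to $J_1$.
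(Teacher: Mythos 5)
Your proof is correct and follows essentially the same line as the paper's: dilate the Whitney ball $B_j$ to a ball centered at a nearby point of $\Omega\setminus U$, apply the density bound \eqref{equ3.1-28Sep} at that point, and absorb the dilation cost via the relative lower volume decay \eqref{doubling-dim}, the constant $(20\lambda)^QC_Q$ having been chosen in advance to make this close. The only (minor) difference is that the paper takes a genuine distance-minimizing point $x_j'\in X\setminus U$ (so $d(x_j,x_j')=8\lambda r_j$ exactly) and uses the radius $9\lambda r_j$, yielding the bound $(9/20)^Q$, whereas you take a nearly-minimizing $y$ with $d(x_j,y)<9\lambda r_j$ and use the radius $10\lambda r_j$, yielding $2^{-Q}$; both are $\le 1/2$ because $Q>1$. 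Your version sidesteps the (harmless, since the space is proper) question of whether the infimum is attained, but this is a cosmetic difference.
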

		\end{mybox}
		\begin{proof}
			Let $j\in J_1$.
			We find a point $x_j'\in X\setminus U$ such that
			$\dist(x_j,X\setminus U)=d(x_j,x_j')$.
			Since $x_j\in U_1$, we necessarily have $x_j'\in \Omega\setminus U$.
			Since $d(x_j, X\setminus U)= 8\lambda r_j$, we have
			\[
			B_j\subset  B(x_j', 9\lambda r_j).
			\]
			It follows from  the property of the relative lower volume decay and the estimate \eqref{equ3.1-28Sep}
			that
			\begin{align*}
				\mu(B_j\cap G)
				&\leq  \mu(B(x_j', 9\lambda  r_j)\cap G)\\
				&\overset{\eqref{equ3.1-28Sep}}{\leq}  \frac{1}{(20\lambda)^Q C_Q}\mu(B(x_j', 9\lambda r_j))\\
				&\overset{\eqref{doubling-dim}}{\leq}  \frac{1}{(20\lambda)^Q C_Q} C_Q \left( \frac{9 \lambda  r_j} {r_j}\right)^Q \mu(B_j)\\
				&\leq \frac 12 \mu(B_j).
			\end{align*}
			The proof is completed.
		\end{proof}
		
		We now construct the function $f_\varepsilon$ and show that
		it satisfies the properties given in the theorem.
		Note that for all $j\in\N$,
		we have $2\lambda B_j\Subset U\subset \Omega$ and then
		$f\in L^1(2\lambda B_j)$.
		Moreover, in the definition of the total variation $\|Df\|(2\lambda B_j)$, one can consider
		approximating locally Lipschitz function converging in $L^1(2\lambda B_j)$ and
		not only in $L_{\loc}^1(2\lambda B_j)$, see e.g. \cite[Lemma 3.2]{La20}.
		Applying also Egorov's theorem, for each $j\in\mathbb N$ we find a function 
		$f_j\in \liploc(2\lambda B_j)$
		and a set
		$N_j\subset 2B_j$ such that
		\begin{equation}
			\begin{cases}
				\|Df_{j}\|(2\lambda B_j)
				\leq \|Df\|(2\lambda B_j) +  2^{-j}\varepsilon,\\
				\| f_j-f\|_{L^1({2\lambda B_j})}\leq 2^{-j}\varepsilon r_j,\\ 
				|f_j(y)-f(y)|\leq 2^{-j}  {\rm \ \ for \ all\ }y\in 2 B_j\setminus N_j,\\
				\mu(N_j)\leq 
				2^{-j} \min\{1, \mu(B_j)/100\}  .
			\end{cases}\label{eq3.7-Sep}
		\end{equation}
		
		For each $j\in\mathbb N$, we let 
		\begin{equation}\label{def-aj}
			a_j(x):= 
			\begin{cases}
				\max\{ m_j, \min\{M_j, f_j(x)\}\}, & \ \ {\rm if\ }j\in J_1,\\
				f_j(x), &{\rm \ \ if\ }j\in J_2,
			\end{cases}
		\end{equation}
		for $x\in 2B_j$, where $m_j, M_j$ are defined by 
		\begin{equation}\label{def-Mjmj}
			m_j:= {\rm essinf}_{x\in 2B_j\setminus (G\cup N_j)}f_j(x) 
			\ \ 
			{\rm and}
			\ \
			M_j:= {\rm esssup}_{x\in 2B_j\setminus (G\cup N_j)}f_j(x).
		\end{equation}
		Note that $a_j$ is continuous on $2B_j$ for each $j\in\mathbb N$.
		We have from Lemma \ref{lem3.1-28Sep} and the fourth property of \eqref{eq3.7-Sep}
		that for $j\in J_1$,
		\begin{equation}\label{eq3.8-Sep}
			\mu(2B_j\setminus (G\cup N_j))
			\geq   \left( 1- \frac{1}{2} - \frac{1}{2^j}\frac{1}{100}\right) \mu(B_j)
			\ge \frac{\mu(B_j)}{4}
			>0,
		\end{equation}
		and so $M_j, m_j$ are well defined. We define the function $f_\varepsilon$ on $\Omega$ by setting
		\begin{equation}\label{def-f}
			f_\varepsilon:= f \chi_{\Omega\setminus U} + \sum_{j\in\mathbb N} a_j \varphi_j,
		\end{equation}
		where the functions $\varphi_j$ are the partition of unity with respect to the
		Whitney covering $\{ B_j\}_{j\in\mathbb N}$.
		\begin{mybox}
			\begin{lemma}
				\label{lem3.4-Sep} For all $x\in \Omega\setminus U$,  we have $f_\varepsilon^\vee(x)=f^\vee(x)$ and $f_\varepsilon^\wedge(x)=f^\wedge(x)$.
			\end{lemma}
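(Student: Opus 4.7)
Since each $\varphi_j$ is supported in $2B_j\subset U$ (because $r_j\le \dist(x_j,X\setminus U)/(8\lambda)$ forces $2B_j\subset U$), we have $f_\varepsilon\equiv f$ pointwise on $\Omega\setminus U$. Thus the task is to show that the values of $f_\varepsilon$ on $B(x,r)\cap U$ do not perturb the approximate upper/lower limits at $x\in\Omega\setminus U$. The idea is to trap $f_\varepsilon$ in a narrow interval around $[f^\wedge(x),f^\vee(x)]$ on $B(x,r)\cap U$, while showing that the ``bad'' sets ($G$ and $\bigcup_jN_j$) have density zero at $x$.

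Fix $x\in\Omega\setminus U$ and $\delta>0$. Since $G\subset U$, $x\in\Omega\setminus G$, so by \eqref{equ3.2-28Sep} there is a neighborhood $V\ni x$ in $\Omega$ with $f^\vee<f^\vee(x)+\delta$ and $f^\wedge>f^\wedge(x)-\delta$ on $V\setminus G$. For all sufficiently small $r>0$ the following hold: (a) $\mu(G\cap B(x,r))/\mu(B(x,r))<\delta$ by \eqref{equ3.1-28Sep}; (b) every Whitney ball $B_j$ with $2B_j\cap B(x,r)\ne\emptyset$ satisfies $2B_j\subset V$ and lies in $J_1$, because $8\lambda r_j\le d(x_j,x)\le r+2r_j$ forces $r_j\lesssim r$ while $\dist(x_j,X\setminus\Omega)$ stays bounded below; (c) the index $j$ of any such ball can be made arbitrarily large, since $d(x_j,x)\ge 8\lambda r_j>r_j$ gives $x\notin\overline{2B_j}$ for every $j$, and the finite union $\bigcup_{j\le J}\overline{2B_j}$ is bounded away from $x$ for each fixed $J$, so for $r$ small only large indices survive.

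For any such relevant $j$, the bound $|f_j-f|\le 2^{-j}$ on $2B_j\setminus N_j$ from \eqref{eq3.7-Sep}, together with $f^\wedge\le f\le f^\vee$ at Lebesgue points and the semicontinuity bounds on $V\setminus G$, yields
\[
f^\wedge(x)-2\delta \;\le\; m_j \;\le\; a_j \;\le\; M_j \;\le\; f^\vee(x)+2\delta \qquad\text{on } 2B_j,
\]
by comparing $\mathrm{esssup}/\mathrm{essinf}$ on $2B_j\setminus(G\cup N_j)$ first with $f^\vee,f^\wedge$ on $2B_j\setminus G$ and then with the semicontinuity bounds. Summing $f_\varepsilon=\sum_j a_j\varphi_j$ against the partition of unity (which sums to $1$ on $U$) gives $f_\varepsilon\in[f^\wedge(x)-2\delta,f^\vee(x)+2\delta]$ on $B(x,r)\cap U$. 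Combined with $f_\varepsilon=f$ on $B(x,r)\setminus U$ and $f\in[f^\wedge(x)-\delta,f^\vee(x)+\delta]$ a.e.\ on $B(x,r)\setminus G$ (at Lebesgue points), this traps $f_\varepsilon$ in $[f^\wedge(x)-2\delta,f^\vee(x)+2\delta]$ on $B(x,r)$ outside the density-zero set $G\cap B(x,r)$. Hence $f_\varepsilon^\vee(x)\le f^\vee(x)+2\delta$ and $f_\varepsilon^\wedge(x)\ge f^\wedge(x)-2\delta$; letting $\delta\to0$ proves one direction of each equality.

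For the reverse inequalities, the same estimates give $|a_j-f|\le 2^{-j}$ a.e.\ on $2B_j\setminus(G\cup N_j)$ (since there $a_j=f_j$), so $|f_\varepsilon-f|$ is arbitrarily small a.e.\ on $B(x,r)\setminus(G\cup\bigcup_jN_j)$ as $r\to 0$. The set $\bigcup_jN_j\cap B(x,r)$ has density zero at $x$ by $\mu(N_j)\le 2^{-j}\mu(B_j)/100$, bounded overlap $\sum_{j\text{ relevant}}\mu(B_j)\lesssim\mu(B(x,r))$, and the index-largeness (c). Thus for any $t<f^\vee(x)$, the positive-density set $\{f>t\}\cap B(x,r)$ is, up to a density-zero perturbation, contained in $\{f_\varepsilon>t-\eta\}\cap B(x,r)$ for any $\eta>0$, forcing $f_\varepsilon^\vee(x)\ge f^\vee(x)$; the statement for $f^\wedge$ is symmetric. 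The principal technical obstacle is handling the three exceptional sets $G$, $\bigcup_jN_j$, and the Whitney boundary layer simultaneously; the index-largeness observation (c) is what lets the $2^{-j}$-errors be absorbed into $\delta$ uniformly over all relevant Whitney balls.
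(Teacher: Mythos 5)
Your proof is correct, and its core argument --- the ``reverse'' paragraph --- coincides with the paper's: on $B(x,r)\cap U$, away from the set $G\cup\bigcup_{j\in J_1}N_j$, one has $a_j=f_j$ a.e.\ on $2B_j$, hence $|f_\varepsilon-f|\le\sum_{j:\,2B_j\cap B(x,r)\ne\emptyset}2^{-j}\to0$ as $r\to0$, and since $G\cup\bigcup_j N_j$ has density zero at $x$ (by \eqref{equ3.1-28Sep} and the fourth property of \eqref{eq3.7-Sep}), the set $\{|f_\varepsilon-f|>\delta\}$ has density zero at $x$; this symmetric fact yields \emph{both} inequalities $f_\varepsilon^\vee(x)\le f^\vee(x)$ and $f_\varepsilon^\vee(x)\ge f^\vee(x)$ at once (and likewise for $f^\wedge$), exactly as in the paper's \eqref{eq:f epsilon f}--\eqref{eq3.13-3Dec}. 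This means your preliminary trapping step --- bounding $M_j\le f^\vee(x)+2\delta$ and $m_j\ge f^\wedge(x)-2\delta$ via the semicontinuity from \eqref{equ3.2-28Sep} --- is logically redundant here: it supplies only one inequality in each pair, and in the paper the semicontinuity hypothesis is deliberately reserved for Lemma \ref{lem3.5-Sep}, keeping the proof of the present lemma independent of it. Two smaller points: your index-largeness observation (c) is a clean justification that $\sum_{j:\,2B_j\cap B(x,r)\ne\emptyset}2^{-j}\to0$, equivalent to the paper's remark that $2B_j\cap B(x,r)=\emptyset$ for each fixed $j$ and small $r$; and your bounded-overlap estimate $\sum_{j}\mu(B_j)\lesssim\mu(B(x,r))$ for controlling $\sum_j\mu(N_j)$ is a harmless variant of the paper's direct bound $\mu(B_j)\le\mu(B(x,2r))$ in \eqref{eq3.14-Sep}.
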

		\end{mybox}
		\begin{proof}
			Since $f_\varepsilon\equiv f$ on $\Omega\setminus U$ and $\Omega$ is open, by the definition of upper and lower approximate limits we have that $f^\vee(x)=f^\vee_\varepsilon(x)$ and $f^\wedge(x)=f^\wedge_\varepsilon(x)$ for all $x\in \Omega\setminus \overline{U}$.
			
			It remains to consider the case $ x\in \partial U\cap\Omega$. Recall from \eqref{equ3.1-28Sep} that
			\begin{equation}\label{eq3.13-Oct}
				\lim_{r\to0}\frac{\mu(B(x,r)\cap G)}{\mu(B(x,r))}=0.
			\end{equation}
			If a dilated Whitney ball $2B_j$ intersects $B(x,r)$, then clearly
			$2B_j\subset B(x,2r)$.
			Then the fourth property of \eqref{eq3.7-Sep} gives
			\begin{align}
				\limsup_{r\to0}\sum_{j\in J_1:\, 2B_j\cap B(x,r)\neq\emptyset}\frac{\mu(N_j)}{\mu(B(x,r))}
				& \leq  \limsup_{r\to 0} \sum_{j\in J_1:\, 2B_j\cap B(x,r)\neq\emptyset}2^{-j} \frac{\mu(B_j)}{\mu(B(x,r))}\notag\\
				& \leq  \frac{\mu(B(x,2r))}{\mu(B(x,r))} \limsup_{r\to 0} \sum_{j\in J_1:\, 2B_j\cap B(x,r)\neq\emptyset}2^{-j}=0,\label{eq3.14-Sep}
			\end{align}
			since $2B_j\cap B(x,r)=\emptyset$ for any fixed $j$ and sufficiently small $r$.
			Let $r>0$ be so that $B(x,2r)\cap U\subset U_1$.
			By the third property of \eqref{eq3.7-Sep}, we have that for all $y\in B(x,r)\setminus\left(G\cup \cup_{j\in J_1}N_j \right)$,
			\begin{align*}
				|f_\varepsilon(y)-f(y)|
				&\leq \sum_{j\in J_1:\, 2B_j\cap B(x,r)\neq\emptyset} |a_j(y)-f(y)|\chi_{2B_j}(y)\\
				&= \sum_{j\in J_1:\, 2B_j\cap B(x,r)\neq\emptyset} |f_j(y)-f(y)|\chi_{2B_j\setminus (G\cup N_j)}(y)\\
				&\leq  \sum_{j\in J_1:\, 2B_j\cap B(x,r)\neq\emptyset} 2^{-j}.
			\end{align*}
			Notice that the last line of the above estimate converges to $0$ as $r\to0$. Let $\delta>0$ be arbitrary; now there is $r_\delta>0$ such that 
			\[
			\label{eq3.15-Sep} |f_\varepsilon(y)-f(y)|< \delta
			\]
			for all $y\in B(x,r)\setminus \left( G\cup\cup_{j\in J_1}N_j\right)\subset\Omega$ with $0<r<r_\delta$. 
			Using this fact and then \eqref{eq3.13-Oct} and \eqref{eq3.14-Sep}, we get
			\begin{equation}\label{eq:f epsilon f}
				\begin{split}
					&\limsup_{r\to 0}\frac{\mu(\{y\in B(x,r): |f_\varepsilon(y)-f(y)|>\delta \})}{\mu(B(x,r))}\\
					&\quad  \le \limsup_{r\to 0}\frac{\mu(B(x,r)\cap G)}{\mu(B(x,r))}
					+\limsup_{r\to 0} \sum_{j\in J_1: 2B_j\cap B(x,r)\neq\emptyset} \frac{\mu(N_j)}{\mu(B(x,r))} \\
				&\quad= 0.
			\end{split}
		\end{equation}
		Thus we get
		\begin{equation}\label{eq3.13-3Dec}
			\begin{split}
				&\limsup_{r\to 0}\frac{\mu(\{y\in B(x,r): f_\varepsilon(y)>f^\vee(x)+2\delta \})}{\mu(B(x,r))}\\
				&\quad \leq \limsup_{r\to 0}\frac{\mu(\{y\in B(x,r): |f_\varepsilon(y)-f(y)|+f(y)>f^\vee(x)+2\delta \})}{\mu(B(x,r))}\\
				&\quad\leq \limsup_{r\to 0}\frac{\mu(\{y\in B(x,r):  f(y)>f^\vee(x)+\delta \})}{\mu(B(x,r))}\\
				&\quad=0.
			\end{split}
		\end{equation}
		As $\delta>0$ was arbitrary, this implies that $f_\varepsilon^\vee(x)\leq f^\vee(x)$ by
		the definition of the upper approximate limit $f^\vee_\varepsilon(x)$. By symmetry of $f$
		and $f_\varepsilon$ from \eqref{eq:f epsilon f}, we also get
		$f^\vee(x)\leq f_\varepsilon^\vee(x)$ and so $f^\vee(x)=f^\vee_\varepsilon(x)$.
		Similarly, one obtains that $f^\wedge(x)=f^\wedge_\varepsilon(x)$. Then we conclude
		that for all $x\in \Omega\setminus U$, $f_\varepsilon^\vee(x)=f^\vee(x)$
		and $f_\varepsilon^\wedge(x)=f^\wedge(x)$. 
		
		The proof is completed.
	\end{proof}
	
	We denote by $f_+:=\max\{0, f \}$ the positive
	part of the function $f$.
	\begin{mybox}
		\begin{lemma}\label{lem3.2-28Sep}
			We have $\|D(f_\varepsilon-f)\|(\Omega)\lesssim \varepsilon$.
		\end{lemma}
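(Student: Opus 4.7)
The plan is to use the identity $\sum_j\varphi_j\equiv 1$ on $U$ to rewrite
\[
f_\varepsilon-f \;=\; \sum_{j\in\mathbb N}(a_j-f)\varphi_j \;=\; \sum_{j\in\mathbb N}(a_j-f_j)\varphi_j + \sum_{j\in\mathbb N}(f_j-f)\varphi_j
\]
on $U$, to estimate the variation of the two sums separately on each dilated Whitney ball $2\lambda B_j\subset U$, and to sum using the bounded overlap of the Whitney cover together with the crucial input $\|Df\|(U)<\varepsilon$ from \eqref{equ3.1-28Sep}. A preliminary reduction is that $f_\varepsilon\equiv f$ on the open set $\Omega\setminus\overline{U}$, so the Radon measure $\|D(f_\varepsilon-f)\|$ carries no mass there, and only the variation on $U$ and on $\partial U\cap\Omega$ needs to be controlled.

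For the truncation piece $(a_j-f_j)\varphi_j$, observe first that when $j\in J_2$ the definition \eqref{def-aj} gives $a_j=f_j$, so the contribution vanishes. For $j\in J_1$ the function $(a_j-f_j)\varphi_j$ is locally Lipschitz on $2B_j$ and, by the choice of $m_j,M_j$ in \eqref{def-Mjmj}, is supported in $(G\cup N_j)\cap 2B_j$. The Leibniz-type inequality
\[
g_{(a_j-f_j)\varphi_j} \;\le\; |a_j-f_j|\,g_{\varphi_j} \,+\, \varphi_j\, g_{a_j-f_j},
\]
combined with $g_{\varphi_j}\le C_W/r_j$ and $g_{a_j-f_j}\le 2g_{f_j}$ (since truncation does not increase minimal upper gradients), reduces matters to bounding $r_j^{-1}\int_{2B_j}|a_j-f_j|\,d\mu$ and $\int_{2B_j}g_{f_j}\,d\mu$. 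The first is handled by the BV Poincar\'e inequality of Lemma~\ref{lem2.2-Sep} applied to $|a_j-f_j|$: Lemma~\ref{lem3.1-28Sep} together with the last line of \eqref{eq3.7-Sep} ensure that $\mu((G\cup N_j)\cap B_j)\le c_1\mu(B_j)$ for some absolute $c_1<1$, which is precisely the smallness hypothesis of Lemma~\ref{lem2.2-Sep}; the resulting bound is controlled by $\|Df_j\|(2\lambda B_j)\le\|Df\|(2\lambda B_j)+2^{-j}\varepsilon$ via the first line of \eqref{eq3.7-Sep}, and the second integral is bounded by the same quantity.

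For the $L^1$-small piece $(f_j-f)\varphi_j$, the standard product rule for a $\BV$ function times a Lipschitz cutoff supported in $2B_j$ yields
\[
\|D((f_j-f)\varphi_j)\|(2B_j) \;\lesssim\; \frac{C_W}{r_j}\|f_j-f\|_{L^1(2B_j)} + \|D(f_j-f)\|(2B_j) \;\lesssim\; 2^{-j}\varepsilon + \|Df\|(2\lambda B_j),
\]
using the first two lines of \eqref{eq3.7-Sep}. Summing the two kinds of estimates over $j\in\mathbb N$, invoking the bounded overlap $\sum_j\chi_{2\lambda B_j}\le C_0$ together with $\|Df\|(U)<\varepsilon$ and $\sum_j 2^{-j}=1$, controls $\|D(f_\varepsilon-f)\|(U)$ by a constant multiple of $\varepsilon$. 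The remaining mass on $\partial U\cap\Omega$ is absorbed by running the same construction with $U$ enlarged (via Lemma~\ref{lem2.2-19Jul}, still of small $1$-capacity, hence of small variation by Lemma~\ref{lem:variation measure and capacity}) so that $\overline{U}\cap\Omega$ lies in the interior, and then appealing to outer regularity of the Radon measure $\|D(f_\varepsilon-f)\|$.

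The main obstacle I expect is the Poincar\'e estimate for $|a_j-f_j|$: unlike a clean truncation of $f$ itself, this quantity is a truncation of the Lipschitz approximation $f_j$ and is supported in the composite exceptional set $G\cup N_j$, so in the case $j\in J_1$ one must carefully combine the relative-measure bound from Lemma~\ref{lem3.1-28Sep} with the $N_j$-bound from \eqref{eq3.7-Sep} to meet the hypothesis of Lemma~\ref{lem2.2-Sep} with a dimensional constant. A secondary sticking point is the boundary $\partial U\cap\Omega$, which is treated by the enlargement/outer-regularity argument above.
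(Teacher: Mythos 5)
Your proposal follows essentially the same route as the paper: the decomposition $f_\varepsilon-f=\sum_j(a_j-f)\varphi_j$, the Leibniz rule, the truncation identity $a_j=f_j$ for $j\in J_2$, the fact that for $j\in J_1$ the set $\{a_j\neq f_j\}$ sits inside $(G\cup N_j)\cap 2B_j$ whose relative measure is small by Lemma~\ref{lem3.1-28Sep} and \eqref{eq3.7-Sep}, the BV Poincar\'e inequality of Lemma~\ref{lem2.2-Sep}, the first and second properties of \eqref{eq3.7-Sep}, and finally the bounded overlap of $\{2\lambda B_j\}$ together with $\|Df\|(U)<\varepsilon$. The only organizational difference (splitting $\sum_j(a_j-f)\varphi_j$ into two sums before applying Leibniz, rather than applying Leibniz and then splitting $|a_j-f|$) is immaterial.

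The one place you deviate is the treatment of $\partial U\cap\Omega$, and this is an unnecessary detour: the enlargement-of-$U$ step is vague (it would force you to rebuild $f_\varepsilon$ relative to a different exceptional set and then argue that the two constructions agree where it matters) and is in fact not needed. The paper avoids the boundary issue entirely by applying lower semicontinuity of the total variation directly on $\Omega$: since $f_\varepsilon-f=\sum_j(a_j-f)\varphi_j$ holds as an $L^1_{\loc}(\Omega)$ identity (the $\varphi_j$ vanish outside $U$), one bounds $\|D(f_\varepsilon-f)\|(\Omega)$ by $\liminf_N\|D(\sum_{j=1}^N(a_j-f)\varphi_j)\|(\Omega)$, and each partial sum is a locally Lipschitz-weighted BV function whose variation measure is carried by $\bigcup_{j=1}^N 2B_j\Subset U$ (because $\varphi_j$ and $g_{\varphi_j}$ vanish outside $2B_j$). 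Thus one gets $\sum_j\|D((a_j-f)\varphi_j)\|(2B_j)$ as an upper bound for the full $\|D(f_\varepsilon-f)\|(\Omega)$, with no residual mass on $\partial U\cap\Omega$ to account for. If you replace your boundary paragraph with this observation, your argument coincides with the paper's.
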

	\end{mybox}
	\begin{proof}
		
		For any $v,w\in L^1_{\loc}(\Omega)$, it is straightforward to show from the definition of 
		the total variation that
		\begin{equation}\label{eq:BV functions form vector space}
			\Vert D(u+v)\Vert(\Omega)\le \Vert Du\Vert(\Omega)+\Vert Dv\Vert(\Omega).
		\end{equation}
		By the first property of \eqref{eq3.7-Sep},
		we have
		\begin{equation}\label{eq:aj minus f estimate}
			\|D(a_j-f)\|(2B_j)
			\leq \|Df_j\|(2\lambda B_j) + \|Df\|(2\lambda B_j) 
			\leq  2 \|Df\|(2\lambda B_j) + 2^{-j}\varepsilon.
		\end{equation}
		Moreover, note that $\sum_{j=1}^N (a_j-f)\varphi_j\to \sum_{j\in\mathbb N} (a_j-f)\varphi_j$ 
		in $L_{\loc}^1(\Omega)$ as $N\to\infty$, since the sum is locally finite.
		Below, we will also use the Leibniz rule from \cite[Theorem 1.2]{Lah20}.
		Since $\sum_{j\in\mathbb N}\varphi_j\equiv 1$ on $U$,
		we have
		\begin{align}
			&\|D(f_\varepsilon-f)\|(\Omega)\notag \\
			&\qquad = \left\| D\left( \sum_{j\in\mathbb N} (a_j-f)\varphi_j\right) \right\| (\Omega) \notag \\
			&\qquad  \leq \liminf_{N\to \infty}
			\left\| D\left( \sum_{j=1}^N (a_j-f)\varphi_j\right) \right\| (\Omega) 
			\quad \textrm{by lower semicontinuity}\notag \\
			&\qquad \overset{\eqref{eq:BV functions form vector space}}{\leq} \sum_{j\in\mathbb N} \left\| D\left(  (a_j-f)\varphi_j\right) \right\| (2B_j) \notag \\
			&\qquad \leq  \sum_{j\in\mathbb N}\left( \| D(a_j-f)\|(2B_j) + \int_{2B_j}|a_j-f|g_{\varphi_j}\,d\mu \right)\notag\\
			&\qquad \overset{\eqref{eq:aj minus f estimate}}{\le}
			\sum_{j\in\mathbb N}\left(2\|Df\|(2\lambda B_j)+2^{-j}\varepsilon\right)
			+  \sum_{j\in\mathbb N}
			\left( \int_{2B_j}|a_j-f_j|\frac{C_W}{r_j}\,d\mu + \int_{2B_j}|f_j-f|\frac{C_W}{r_j}\,d\mu \right);  \label{eq3.9-29Sep}
		\end{align}
		in the last inequality we also use the fact that $\varphi_j$ is $\frac{C_W}{r_j}$-Lipschitz.
		By the second property of \eqref{eq3.7-Sep}, for all $j\in\mathbb N$ we have
		\begin{equation}\label{eq3.12-Sep}
			\int_{2B_j}  |f_j-f|\frac{C_W}{r_j}\,d\mu \leq \frac{C_W}{r_j}  \frac{r_j}{2^j}\varepsilon
			=C_W  2^{-j}\varepsilon.
		\end{equation}
		By \eqref{eq3.8-Sep}, for all $j\in J_1$ we have
		\[
		\mu(\{x\in 2B_j: (f_j-M_j)_+>0\})\leq  \mu(2B_j\cap (G\cup N_j)) \le \mu(2B_j)/4,
		\]
		and so by Lemma \ref{lem2.2-Sep}, there is a constant $C_2>0$ such that
		\begin{align}
			\frac{C_W}{r_j}\int_{2B_j} (f_j-M_j)_+\,d\mu
			&\leq \frac{C_W}{r_j} C_2 r_j  {\|Df_j\|(2\lambda B_j)}\notag \\
			&\leq C_WC_2 \left(\|Df\|(2\lambda B_j)+2^{-j} \varepsilon\right)\label{eq3.13-Sep}
		\end{align}
		by the first property of \eqref{eq3.7-Sep}.
		In particular, for all $j\in\N$ we have that
		\begin{align}
			\int_{2B_j}|a_j-f_j|\frac{C_W}{r_j}\,d\mu 
			&\lesssim \|Df\|(2\lambda B_j)+2^{-j} \varepsilon. \label{eq3.18-7Oct}
		\end{align}
		Inserting the two estimates \eqref{eq3.12-Sep} and \eqref{eq3.18-7Oct} into
		\eqref{eq3.9-29Sep},
		we conclude from the bounded overlap property of $2\lambda B_j$
		that
		\begin{equation}\label{eq3.21-Oct}
			\|D(f_\varepsilon-f)\|(\Omega)
			\lesssim  \|Df\|(U)+\varepsilon.
		\end{equation}
		Combining this with \eqref{equ3.1-28Sep}, we obtain
		\[
		\|D(f_\varepsilon-f)\|(\Omega)
		\lesssim \varepsilon,
		\]
		which is the claim. The proof is completed.
	\end{proof}
	\begin{mybox}
		\begin{lemma}
			\label{lem3.3-Sep}
			We have $\|f_\varepsilon-f\|_{L^1(\Omega)}\lesssim \varepsilon$.
		\end{lemma}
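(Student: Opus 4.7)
The plan is to exploit that $f_\varepsilon\equiv f$ on $\Omega\setminus U$, so that the $L^1$ difference reduces to an integral over $U$. On $U$ one has $\sum_{j\in\N}\varphi_j\equiv 1$, hence
\[
f_\varepsilon-f=\sum_{j\in\N}(a_j-f)\varphi_j\quad\text{on }U.
\]
Using $0\le\varphi_j\le 1$, $\supp\varphi_j\subset 2B_j$, and the bounded overlap of the dilated Whitney balls $\{2B_j\}$, it suffices to estimate $\sum_{j\in\N}\int_{2B_j}|a_j-f|\,d\mu$. I would split the integrand via the triangle inequality as $|a_j-f|\le|a_j-f_j|+|f_j-f|$, and bound the two pieces separately.

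For the second piece, the $L^1$ control in the second line of \eqref{eq3.7-Sep} gives directly $\int_{2B_j}|f_j-f|\,d\mu\le 2^{-j}\varepsilon r_j\le 2^{-j}\varepsilon$, since $r_j\le 1$ by the Whitney construction. For the first piece, when $j\in J_2$ we have $a_j=f_j$, so the contribution vanishes, and when $j\in J_1$ we have $|a_j-f_j|=(f_j-M_j)_++(m_j-f_j)_+$ by the definition \eqref{def-aj}. Here I would reuse exactly the argument already carried out for \eqref{eq3.13-Sep} in the proof of Lemma \ref{lem3.2-28Sep}: applying the Poincar\'e-type inequality (Lemma \ref{lem2.2-Sep}) to $(f_j-M_j)_+$ and $(m_j-f_j)_+$ on $2B_j$, whose supports occupy at most $\mu(2B_j)/4$ of $2B_j$ by \eqref{eq3.8-Sep}, together with the first line of \eqref{eq3.7-Sep}, to obtain
\[
\int_{2B_j}|a_j-f_j|\,d\mu\lesssim r_j\bigl(\|Df\|(2\lambda B_j)+2^{-j}\varepsilon\bigr)\le\|Df\|(2\lambda B_j)+2^{-j}\varepsilon.
\]

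Summing these bounds over $j\in\N$, the bounded overlap of $\{2\lambda B_j\}$ gives $\sum_{j\in\N}\|Df\|(2\lambda B_j)\lesssim\|Df\|(U)<\varepsilon$ by \eqref{equ3.1-28Sep}, while $\sum_{j\in\N}2^{-j}\varepsilon\le\varepsilon$. This yields $\|f_\varepsilon-f\|_{L^1(\Omega)}\lesssim\varepsilon$ as required. I do not anticipate a genuine obstacle: compared with the proof of Lemma \ref{lem3.2-28Sep}, the factor $C_W/r_j$ coming from the gradient of the partition of unity is absent, leaving an extra factor $r_j\le 1$ that makes the summation even easier.
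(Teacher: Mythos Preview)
Your proof is correct and follows essentially the same approach as the paper: reduce to $\sum_j\int_{2B_j}|a_j-f|\,d\mu$, split via the triangle inequality into $|a_j-f_j|+|f_j-f|$, control the first part by reusing the Poincar\'e-type estimate \eqref{eq3.13-Sep} (with the extra factor $r_j\le1$ absorbed), the second by the $L^1$ bound in \eqref{eq3.7-Sep}, and conclude using bounded overlap and $\|Df\|(U)<\varepsilon$. If anything, your version is slightly more careful in that you explicitly mention both $(f_j-M_j)_+$ and $(m_j-f_j)_+$, whereas the paper's written proof displays only the first of these (the second being handled identically).
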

	\end{mybox}
	\begin{proof}
		We have from the definition of $f_\varepsilon$ that
		\begin{align*}
			|f_\varepsilon-f| 
			&=  \left|\sum_{j\in\mathbb N} (a_j-f)\varphi_j\right| \chi_U\\
			&\leq  \sum_{j\in\mathbb N} (|a_j-f_j| + |f_j-f|)\chi_{2B_j}\\
			&= \sum_{j\in J_1} (f_j-M_j)_{+}\chi_{2B_j} + \sum_{j\in\mathbb N}|f_j-f|\chi_{2B_j}.
		\end{align*}
		By the the estimate \eqref{eq3.13-Sep} and the second property of \eqref{eq3.7-Sep}
		 (note that always $r_j\leq 1$), it follows that
		\begin{align}\notag
			\|f-f_\varepsilon\|_{L^1(\Omega)}
			&\leq \sum_{j\in J_1}\int_{2B_j}(f_j-M_j)_{+}\,d\mu
			+\sum_{j\in\mathbb N}\int_{2B_j}|f_j-f|\,d\mu \\
			&\leq \sum_{j\in\mathbb N} \bigg( C_2\left(\|Df\|(2\lambda B_j)
			+2^{-j}\varepsilon\right) +2^{-j} \varepsilon\bigg).
		\end{align}
		Combining this with the fact that $\Vert Df\Vert(U)<\varepsilon$
		from \eqref{equ3.1-28Sep}, we obtain the claim by the bounded overlap property of
		$2\lambda B_j$. The proof is completed.
	\end{proof}
	\begin{mybox}
		\begin{lemma}
			\label{lem3.5-Sep}$f^\vee_\varepsilon$ is upper semicontinuous on $\Omega$ and $f_\varepsilon^\wedge$ is lower semicontinuous on $\Omega$.
		\end{lemma}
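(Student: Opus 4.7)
The plan is to split $\Omega$ into three regions: the open set $U$, the open set $\Omega\setminus\overline{U}$, and the boundary locus $\partial U\cap\Omega$, and handle them separately. On $U$, the series $\sum_j a_j\varphi_j$ is locally finite by the bounded overlap of $\{2B_j\}$, and each summand $a_j\varphi_j$ is continuous on $U$ with the natural extension by zero, since $a_j$ is continuous on $2B_j$ and $\varphi_j$ is Lipschitz with support in $2B_j$. Consequently $f_\varepsilon$ is continuous on $U$, forcing $f_\varepsilon^\vee\equiv f_\varepsilon\equiv f_\varepsilon^\wedge$ on $U$, and the desired semicontinuity there is immediate. On the open set $\Omega\setminus\overline{U}$ every $\varphi_j$ vanishes, so $f_\varepsilon\equiv f$, giving $f_\varepsilon^\vee\equiv f^\vee$ and $f_\varepsilon^\wedge\equiv f^\wedge$; since $G\subset U$ yields $\Omega\setminus\overline{U}\subset\Omega\setminus G$, the semicontinuity properties recorded in \eqref{equ3.2-28Sep} immediately restrict.

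The real work is at each $x\in\partial U\cap\Omega$, where Lemma \ref{lem3.4-Sep} gives $f_\varepsilon^\vee(x)=f^\vee(x)$ and $f_\varepsilon^\wedge(x)=f^\wedge(x)$, and where $x\in\Omega\setminus G$ because $G\subset U$ is open. I will verify upper semicontinuity of $f_\varepsilon^\vee$ at $x$; the lower semicontinuity of $f_\varepsilon^\wedge$ follows by a symmetric argument employing $a_j\ge m_j$. Fix $\delta>0$ and $j_0\in\mathbb{N}$. By \eqref{equ3.2-28Sep} choose $\eta>0$ so that $f^\vee(y)\le f^\vee(x)+\delta$ for all $y\in B(x,\eta)\cap(\Omega\setminus G)$, and set $\rho_{j_0}:=\min_{j\le j_0}\dist(x,\overline{2B_j})$, which is strictly positive because $2\lambda B_j\Subset U$ forces $\overline{2B_j}\subset U$ while $x\notin U$. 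For a sequence $x_n\to x$ in $\Omega$, the subsequence in $\Omega\setminus U$ is handled by combining Lemma \ref{lem3.4-Sep} with the upper semicontinuity of $f^\vee$ on $\Omega\setminus G$; for the subsequence in $U$, continuity of $f_\varepsilon$ on $U$ reduces the goal to showing $\limsup_n f_\varepsilon(x_n)\le f^\vee(x)$.

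To bound $f_\varepsilon(x_n)=\sum_{j:\,x_n\in 2B_j}a_j(x_n)\varphi_j(x_n)$ for $n$ large, I check that every contributing index $j$ satisfies both $j>j_0$ and $j\in J_1$. The first is forced by $d(x_n,x)<\rho_{j_0}$ together with $x_n\in\overline{2B_j}$. The second follows from the estimate $r_j\lesssim d(x_n,x)$, derived from $\dist(x_j,X\setminus U)\le 8\lambda r_j$, $d(x_n,x_j)\le 2r_j$, and $\dist(x,X\setminus U)=0$, combined with the fact that $\dist(x_j,X\setminus\Omega)$ stays bounded below by $\dist(x,X\setminus\Omega)/2>0$ for $n$ large; this makes the threshold in \eqref{equa3.6-Sep} bounded below while $\dist(x_j,\Omega\setminus U)$ shrinks to zero. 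For such $j\in J_1$, the truncation in \eqref{def-aj} gives $a_j(x_n)\le M_j$, and since $2B_j\subset B(x,\eta)$ for $n$ large, the third property of \eqref{eq3.7-Sep} combined with the identity $f=f^\vee$ $\mu$-almost everywhere at Lebesgue points and the choice of $\eta$ yield $M_j\le f^\vee(x)+\delta+2^{-j}$. Using $\sum_j\varphi_j(x_n)=1$ on $U$, I conclude
\[
f_\varepsilon(x_n)\le f^\vee(x)+\delta+\sum_{j>j_0}2^{-j}=f^\vee(x)+\delta+2^{-j_0},
\]
and the result follows by sending first $n\to\infty$, then $j_0\to\infty$, and finally $\delta\to 0$.

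The main obstacle I anticipate is precisely this last estimate: the pointwise error $2^{-j}$ in the third property of \eqref{eq3.7-Sep} is uniform in $j$, so if the Whitney indices $j$ near $x$ stayed bounded we would only get $f_\varepsilon(x_n)\le f^\vee(x)+O(1)$. The geometric escape is the compactness relation $\overline{2B_j}\subset U$, which lets us force $j>j_0$ once $x_n$ is close enough to $x\in\partial U$, collapsing the tail $\sum_{j>j_0}2^{-j}$ to $2^{-j_0}\to 0$; accomplishing (i) this localization and (ii) the simultaneous verification $j\in J_1$ (so that the truncation is actually present in the definition of $a_j$) is where the definitions of $U_1$ and $J_1$ in \eqref{equa3.6-Sep} earn their keep.
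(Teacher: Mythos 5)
Your proof is correct and follows essentially the same route as the paper: split $\Omega$ into $U$, $\Omega\setminus\overline{U}$, and $\partial U\cap\Omega$, use continuity of $f_\varepsilon$ on $U$ and the restriction property \eqref{equ3.2-28Sep} off $\overline{U}$, and at $\partial U\cap\Omega$ exploit the truncation $a_j\le M_j$ together with the third property of \eqref{eq3.7-Sep} and the upper semicontinuity of $f^\vee|_{\Omega\setminus G}$, absorbing the $\sum 2^{-j}$ tail by shrinking the neighborhood. The paper phrases the last step via a radius $r$ that is chosen small so that $B(x,2r)\cap U\subset U_1$ and the tail sum is $<\delta/2$, whereas you phrase it via sequences with an explicit cutoff $j_0$; these are the same idea in different clothing.
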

	\end{mybox}
	\begin{proof}
		Since $f_\varepsilon\equiv f$ on $\Omega\setminus U$, and $f^\vee|_{\Omega\setminus G}$ and
		$f^\wedge|_{\Omega\setminus G}$ are respectively upper and lower
		semicontinuous by \eqref{equ3.2-28Sep}, we conclude that
		the functions $f^\vee_\varepsilon$ and
		$f^\wedge_\varepsilon$ are respectively upper and lower semicontinuous in
		$\Omega\setminus \overline{U}$.
		Since $f_\varepsilon$ is
		continuous on $U$, so are $f^\vee_\varepsilon$ and $f^\wedge_\varepsilon$.
		It remains to consider the case
		$x\in\partial U\cap\Omega$.
		
		Let $x\in \partial U\cap\Omega$ be arbitrary.
		We will first prove that $f^\vee_\varepsilon$ is upper semicontinuous at $x$.
		Let $\delta>0$ be arbitrary.
		Since $f^\vee$ is upper semicontinuous on $\Omega\setminus G$ by \eqref{equ3.2-28Sep},
		there exists $r_\delta>0$ with $B(x,2r_{\delta})\cap U\subset U_1$ and
		\begin{equation}\label{eq3.23-Oct}
			f^{\vee}(z)\le f^{\vee}(x)+\delta/2\quad \textrm{for all }z\in B(x,2r_\delta)\setminus G.
		\end{equation}
		Let $0<r<r_\delta$ be fixed.
		We have for $j\in J_1$ and for all $y\in 2B_j$ that
		\begin{align}
			a_j(y)
			&\le M_j\notag \\
			&={\rm esssup}_{z\in 2B_j\setminus (G\cup N_j)}f_j(z)\notag \\
			&\le {\rm esssup}_{z\in 2B_j\setminus (G\cup N_j)}f(z)+2^{-j} \label{eq3.20-28Oct}
		\end{align}
		by the third property of \eqref{eq3.7-Sep}.
		Let $y\in B(x,r)\cap U\subset B(x,2r)\cap U\subset U_1$.
		We have from the fact that $f_\varepsilon$ is continuous at $y$ and by Lemma \ref{lem3.4-Sep} that
		\begin{align*}
			f_\varepsilon^\vee(y)-f_\varepsilon^\vee(x)
			&= f_\varepsilon(y)-f^\vee(x) \\
			&=  \sum_{j\in J_1:\, 2B_j\ni y} (a_j(y)-f^\vee(x)) \varphi_j(y)\\
			&\overset{\eqref{eq3.20-28Oct}}{\leq}   \sum_{j\in J_1:\, 2B_j\ni y} \left[({\rm esssup}_{z\in 2B_j\setminus 
				(G\cup N_j)}f(z)-f^\vee(x))_{+} +2^{-j} \right]\varphi_j(y)\\
			&\leq  ({\rm esssup}_
			{z\in B(x,2r)\setminus G}f(z)-f^\vee(x))_{+} +\sum_{j\in J_1:\, 2B_j\cap B(x,2r)
				\neq \emptyset}2^{-j} \\
			&\overset{\eqref{eq3.23-Oct}}{\leq} \delta/2 +\sum_{j\in J_1:\, 2B_j\cap B(x,2r)
				\neq \emptyset}2^{-j} .
		\end{align*}
		By choosing $r>0$ smaller, if necessary, the sum above can also be made smaller
		than $\delta/2$, and so in total we have
		\[
		\sup_{B(x,r)}(f_\varepsilon^\vee-f_\varepsilon^\vee(x)) \le \delta.
		\]
		Thus  $f^\vee_\varepsilon$ is upper semicontinuous at $x$. 
		For the lower semicontinuity of $f^\wedge_\varepsilon$, the proof is similar.
		The proof is completed.

	\end{proof}
	
	\begin{proof}
		[Proof of Theorem \ref{thm1.1}]
		Let $\varepsilon>0$ be arbitrary. We take
		$U$ and $f_\varepsilon$ defined as in \eqref{equ3.1-28Sep} and \eqref{def-f}. Let $U_\varepsilon:=U$. Then
		
		(1) is given by  \eqref{equ3.1-28Sep};
		
		(2) follows since $\|f-f_\varepsilon\|_{\BV(\Omega)}<\varepsilon$ by Lemmas \ref{lem3.2-28Sep}-\ref{lem3.3-Sep};
		
		(3) is given by Lemma \ref{lem3.4-Sep};
		
		(4) is given by Lemma \ref{lem3.5-Sep}.
		
	\end{proof}
	
	\section{Proof of Corollary \ref{cor1.2}}
	
	Fix a point $O\in X$ and for each $i=0,1,\ldots$, define the annulus
	\[
	A_i:=\Omega\cap (B(O,i+1)\setminus B(O,i));
	\]
	we interpret $B(O,0)=\emptyset$.
	Also let $t\cdot A_i:= \{x\in \Omega: d(x,A_i)<(t-1)\}$ for $t>1$. We recall that $(X,d,\mu)$ is a complete doubling metric measure space supporting a $1$-Poincar\'e inequality.
	
	To prove Corollary \ref{cor1.2}, we need the following auxiliary lemmas.
	
	\begin{mybox}
		\begin{lemma}\label{lem4.2-Oct}
			Let $\Omega$ be an unbounded open subset of $X$, and let
			$f\in \BV(\Omega)$.
			Then for every $\varepsilon>0$, there exist an open set $E_\varepsilon\subset \Omega$
			with  ${\rm Cap}_1(E_\varepsilon)<\varepsilon$ and a function $h_\varepsilon$ so that the
			following properties hold:
			\begin{enumerate}
				\item $h_{\varepsilon}^\vee\equiv  f^\vee$
				and $h_{\varepsilon}^\wedge\equiv  f^\wedge$ on $ \Omega\setminus E_\varepsilon$;
				\item There is a sequence $\{b_i \}_{i=0}^{\infty}$ with $b_i$ is non-increasing, $b_i>0$
				and $\lim_{i\to\infty}b_{i}=0$ so that 
				\[
				|h_{\varepsilon}|\leq b_i {\rm\ \ on\ \ } 2\cdot A_i{\rm\ \ for\ all\ \ } i=0,1,\ldots;
				\]
				\item $\| h_{\varepsilon}-f\|_{\BV(\Omega)} \lesssim \|f\|_{\BV(E_\varepsilon)}< \varepsilon$.
			\end{enumerate}
		\end{lemma}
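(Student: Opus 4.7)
The strategy is to modify $f$ on a small-capacity set $E_\varepsilon$ so that the modified function decays to $0$ at infinity in the sense of (2), using the same Whitney-ball construction as in the proof of Theorem \ref{thm1.1}, but with a carefully chosen ``tail super-level'' exceptional set in place of the set $G$ where quasi-semicontinuity fails.

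First I would choose the sequence $(b_i)$. Let $\tilde A_i := \{x\in\Omega : \dist(x,A_i)<2\}$; these enlarged annuli have bounded overlap, so $\sum_i \|f\|_{\BV(\tilde A_i)}\lesssim \|f\|_{\BV(\Omega)}<\infty$. Set $c_i := \sum_{j\ge i}\|f\|_{\BV(\tilde A_j)}$, which decreases to $0$, and define
\[
b_i := K\max\bigl\{\sqrt{c_i},\, 2^{-i}\bigr\}
\]
for a large constant $K=K(\varepsilon)$. Then $b_i$ is strictly positive, non-increasing, and tends to $0$. The telescoping inequality $c_i-c_{i+1}\le 2\sqrt{c_i}(\sqrt{c_i}-\sqrt{c_{i+1}})$ gives $\sum_i \|f\|_{\BV(\tilde A_i)}/b_i\lesssim \sqrt{c_0}/K$, which can be made as small as we like by taking $K$ large.

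Next I would build the exceptional set. Using a Lipschitz cutoff $\psi_i$ with $\psi_i\equiv 1$ on $2A_i$ and $\supp\psi_i\subset \tilde A_i$, the Leibniz rule gives $f\psi_i\in\BV(\Omega)$ with $\|f\psi_i\|_{\BV(\Omega)}\lesssim \|f\|_{\BV(\tilde A_i)}$. A weak-type capacity estimate for BV functions (obtained by approximating $f\psi_i$ by locally Lipschitz functions as in \eqref{eq2.7-2711} and applying the Newton--Sobolev bound $\mathrm{Cap}_1(\{u>t\})\le \|u\|_{N^{1,1}}/t$ to each approximant, then passing to the limit) yields
\[
\mathrm{Cap}_1\Bigl(\bigl\{x\in 2A_i : f^\vee(x)>b_i/2\bigr\}\cup\bigl\{x\in 2A_i : f^\wedge(x)<-b_i/2\bigr\}\Bigr)\lesssim \|f\|_{\BV(\tilde A_i)}/b_i.
\]
Summing over $i$ and applying Lemma \ref{lem2.2-19Jul} produces an open set $E_\varepsilon\subset\Omega$ with $\mathrm{Cap}_1(E_\varepsilon)<\varepsilon$ that contains all such super-level points. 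I then set $h_\varepsilon$ by the formula \eqref{def-f}, using the Whitney covering $\{B_j\}$ and partition of unity on $E_\varepsilon$, with the truncated Lipschitz approximations $a_j$ defined as in \eqref{def-aj} and \eqref{def-Mjmj}, where the essential inf/sup is taken over $2B_j\setminus(\bigcup_i E_i\cup N_j)$. The analog of Lemma \ref{lem3.1-28Sep} will give the measure estimate $\mu(2B_j\setminus(\bigcup_i E_i\cup N_j))\ge \mu(B_j)/4$, so $m_j,M_j$ are well defined.

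Property (1) then follows verbatim from the proof of Lemma \ref{lem3.4-Sep}, property (3) from the argument of Lemma \ref{lem3.2-28Sep}--\ref{lem3.3-Sep} combined with absolute continuity of $\|Df\|$ with respect to capacity (Lemma \ref{lem:variation measure and capacity}). The main obstacle is property (2): given $x\in 2A_i$, any Whitney ball $B_j\ni x$ has radius $\le 1$ and so $2B_j\subset 2A_{i-1}\cup 2A_i\cup 2A_{i+1}$; points in $2B_j\setminus(\bigcup_k E_k\cup N_j)$ therefore satisfy $|f|\le b_{i-1}$ (the largest relevant value), which gives $|a_j|\le b_{i-1}+2^{-j}$ but not $|a_j|\le b_i$ directly. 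This off-by-one and partition-of-unity error is handled by shifting indices: after constructing the $b_i$ as above, I would redefine the final sequence by $b_i':=b_{i-C}$ for a fixed integer shift $C$ absorbing the annulus overlap and the summable $2^{-j}$ terms. The decay and summability properties are preserved, and the bound $|h_\varepsilon|\le b_i'$ on $2A_i$ then follows by the non-increasing property of $(b_i)$.
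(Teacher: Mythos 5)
Your construction of $h_\varepsilon$ departs substantially from the paper's, and this departure introduces a genuine gap in proving property~(2). The paper simply sets
\[
h_\varepsilon:=\sum_{i=0}^\infty \max\{-d_i,\min\{d_i,f\}\}\,\eta_i ,
\]
a hard truncation of $f$ at height $d_i$ smoothly patched by a partition of unity $\{\eta_i\}$ subordinate to the annuli $2\cdot A_i$. On $2\cdot A_i$ the only contributing terms are those with $|j-i|\le 2$, each bounded in absolute value by $d_j\le d_{i-2}$, so $|h_\varepsilon|\le d_{i-2}=:b_i$ holds pointwise with no extra error. The Whitney balls, Egorov sets $N_j$, discrete convolutions $f_j$, and boundary truncations $M_j,m_j$ that you import from the proof of Theorem~\ref{thm1.1} are entirely unnecessary here; they are applied \emph{later}, in the proof of Corollary~\ref{cor1.2}, after Lemma~\ref{lem4.2-Oct} has already produced a decaying $h_\varepsilon$.

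The gap: in your version you obtain only $|a_j|\le b_{i-1}+2^{-j}$, and you propose to absorb the $2^{-j}$ term by a fixed index shift $b_i':=b_{i-C}$. This fails because the Whitney index $j$ of a ball covering a point $x\in E_\varepsilon$ is not correlated with the distance of $x$ to the origin: a ball $B_1$ (error $2^{-1}$) can sit arbitrarily far out in $E_\varepsilon$. Thus $\max\{2^{-j}:x\in 2B_j\}$ need not tend to $0$ as $x\to\infty$, and since $b_{i-C}\to 0$ the inequality $|h_\varepsilon|\le b_i'$ on $2\cdot A_i$ cannot hold. To rescue this route you would have to either (a) explicitly truncate each $f_j$ at height $b_i$, which is precisely what the paper's Lemma~\ref{lem4.4-21Oct} does in the \emph{next} step of Corollary~\ref{cor1.2}, or (b) re-choose the Egorov tolerance as $\min\{2^{-j},b_{i(j)}\}$ where $i(j)$ indexes the annulus containing $B_j$; neither modification appears in your proposal. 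A secondary, smaller issue is that your route yields $\|h_\varepsilon-f\|_{\BV(\Omega)}\lesssim\|Df\|(E_\varepsilon)+\varepsilon$ rather than the stated sharper bound $\lesssim\|f\|_{\BV(E_\varepsilon)}$, because of the summed $2^{-j}\varepsilon$ terms; the paper's truncation construction gives the sharper bound directly via the coarea formula and the Leibniz rule, since $h_\varepsilon-f\equiv 0$ a.e.\ outside $E_\varepsilon$ so that $\|D(h_\varepsilon-f)\|(\Omega\setminus E_\varepsilon)=0$. Your choice of $b_i$ via $\sqrt{c_i}$ is a perfectly good explicit instance of the sequence whose existence the paper merely asserts, and your capacity estimate for the superlevel sets is essentially the one the paper cites from \cite{LS17}, though the passage to the limit in your sketch would need the actual argument from that reference rather than a bare appeal to the Newtonian weak-type bound applied to approximants.
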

	\end{mybox}
	Note that Claim (2) simply amounts to saying that
	$$\lim_{\Omega\ni x\to\infty}h_\varepsilon^\vee(x)=0=\lim_{\Omega\ni x\to\infty}h_\varepsilon^\wedge(x);$$
	however, we will use the notation $b_i$ later.
	\begin{proof}
		Fix $\varepsilon>0$.
		Let $C'>0$ be the constant from \cite[Proof of Lemma 4.2]{LS17}, used below. Let $C_1$ be the constant from Lemma \ref{lem2.2-19Jul}.
		Then there is a sequence $d_i>0$ with $d_i$ is non-increasing so that
		\begin{equation}\label{eq3.21-Sep}
			\lim_{i\to\infty}d_i=0{\rm \ \ and\ \ } \sum_{i=0}^{\infty} 
			\frac{\|f\|_{\BV(3\cdot A_i)}}{d_i}<\frac{\varepsilon}{6C_1C'},
		\end{equation}
		because $\Vert f\Vert_{L^1(\Omega)}$ and $\|Df\|(\Omega)$ are both finite and because of 
		the bounded overlap property of $3\cdot A_i$.
		Let
		\[
		E_i:=\left\{x\in 2\cdot A_i: |f|^\vee(x) >d_i    \right\}
		\]
		for each $i\in\mathbb N$.
		
		We pick a partition of unity $\{ \eta_i\}_{i=0}^\infty$ with
		the following properties: $\sum_{i=0}^\infty\eta_i\equiv 1$ on $\Omega$, $0\leq \eta_i\leq 1$, each $\eta_i$ is a $C_W$-Lipschitz function for some $C_W\geq 1$,
		and 
		$\supp \eta_i\subset 2\cdot A_i$.
		Such a partition of unity exists, see e.g. the construction in \cite[Page 104-105]{HKST15}.

		We define the function $h_\varepsilon$ on $\Omega$ by setting
		\[
		h_{\varepsilon}:=\sum_{i=0}^\infty \max\{ -d_i, \min\{ d_i, f\}\}\eta_i.
		\]
		Then we let $b_{0}:=d_0$, $b_{1}:=d_0$, and 
		$b_i:=d_{i-2}$ for all $i=2,3,\ldots$. Recall that $d_i$ is non-increasing, and then so is
		$b_i$.
		Now we have that Claim (2) holds.
		Let $\phi_i$ be a $2$-Lipschitz function so that $0\leq \phi_i\leq 1$ on $\Omega$, $\phi_i\equiv 1$ on $2\cdot A_i$ and $\supp \phi_i\subset 3\cdot A_i$.
		By \cite[Proof of Lemma 4.2]{LS17} applied to the function $\phi_i f$, we obtain that 
		there is a finite constant $C'>0$ independent of $i\in\mathbb N$ so that for
		all $i=0,1,\ldots$,
		\[
		{\rm Cap}_1(E_i)
		\leq C' \frac{\|\phi_i f\|_{\BV(3\cdot A_i)}}{d_i}\leq 3C' \frac{\| f\|_{\BV(3\cdot A_i)}}{d_i},
		\]
		where the last inequality is obtained by the Leibniz rule \cite[Theorem 1.2]{Lah20}.
		Set $F_\varepsilon:=\bigcup_{i=0}^\infty E_i$.
		Combining this with \eqref{eq3.21-Sep}, we have 
		${\rm Cap}_1(F_\varepsilon)<\frac{\varepsilon}{2C_1}$. By Lemma \ref{lem2.2-19Jul}, there are a finite such constant $C_1>0$ and an open set $X\supset E_\varepsilon\supset F_\varepsilon$ so that ${\rm Cap}_1(E_\varepsilon)\leq C_1 {\rm Cap}_1(F_\varepsilon)+\varepsilon/2< \varepsilon$ and
		\[
		\lim_{r\to0}\frac{\mu(B(x,r)\cap F_\varepsilon)}{\mu(B(x,r))}=0 {\rm \ \ for \ all\ }x\in \Omega\setminus E_\varepsilon.
		\]
		By Lemma \ref{lem:variation measure and capacity}, we can also assume that
		$\Vert f\Vert_{\BV(E_\varepsilon)}<\varepsilon$. Also, we can assume that $E_\varepsilon\subset \Omega$. 
		It follows from the definition of $h_\varepsilon$ that $h_\varepsilon= f$ a.e.
		in $\Omega\setminus F_\varepsilon$,
		and hence we obtain for all $x\in \Omega\setminus E_\varepsilon$
		and for all $\delta>0$ that
		\begin{equation}\label{eq4.4-10Dec}
			\begin{split}
				& \lim_{r\to 0}\frac{\mu(B(x,r)\cap \{|f-h_\varepsilon|>\delta \})}{\mu(B(x,r))}\\
				& \quad \leq    \lim_{r\to 0}\frac{\mu(B(x,r)\cap \{y\in \Omega\setminus F_\varepsilon: |f(y)-h_{\varepsilon}(y)|>\delta\})}{\mu(B(x,r))} + \lim_{r\to0}\frac{\mu(B(x,r)\cap F_\varepsilon)}{\mu(B(x,r))}\\
				&\quad = 0.
			\end{split}
		\end{equation}
		Therefore, for all $t>0$, $\partial^*\{|f(x)-h_\varepsilon(x)|>t \}\cap (\Omega\setminus E_\varepsilon)=\emptyset$,
		and then from the coarea formula \eqref{eq2.8-Oct}
		and from \eqref{eq2.5-0411} it follows that
		\begin{equation}\label{eq:measure outside E eps}
			\Vert D(h_{\varepsilon}-f)\Vert(\Omega\setminus E_{\varepsilon})=0.
		\end{equation}
		
		By the arguments as in \eqref{eq:f epsilon f} and \eqref{eq3.13-3Dec},
		we  obtain from \eqref{eq4.4-10Dec} that $h_{\varepsilon}^\vee(x)= f^\vee(x)$
		and $h_{\varepsilon}^\wedge(x)= f^\wedge(x)$ for $x\in \Omega\setminus E_\varepsilon$.
		Now we have that Claim (1)  holds.
		
		Using \eqref{eq:measure outside E eps} and the Leibniz rule in
		\cite[Theorem 1.2]{Lah20}, we have that
		\begin{align}
			\|D(h_{\varepsilon}-f)\|(\Omega) 
			&=  \|D(h_{\varepsilon}-f)\|(E_\varepsilon)\notag\\
			&\leq  \|Df\|(E_\varepsilon)+ \|Dh_{\varepsilon}\|(E_\varepsilon) \notag \\
			&\leq  \|Df\|(E_\varepsilon)+  \sum_{i=0}^\infty  \|D (\max\{ -d_i, \min\{ d_i, 
			f\}\} \eta_i)\|(2\cdot A_i \cap E_\varepsilon) \notag   \\
			&\leq   \|Df\|(E_\varepsilon) + \sum_{i=0}^\infty  \left(\|D f\|(2\cdot A_i\cap E_\varepsilon) + C_W \int_{2\cdot A_i\cap E_\varepsilon}|f|\,d\mu \right) \notag  \\
			&\lesssim  \|Df\|({E_\varepsilon}) + \int_{E_\varepsilon}|f|\,d\mu \label{eq4.4-Oct}. \notag
		\end{align}
		Here we also used 
		the fact that $$\|D (\max\{ -d_i, \min\{ d_i, f\}\} ) \|(A)\leq \|Df\|(A)$$ for all
		$A\subset \Omega$.
		Thus Claim (3) also holds since $\|f\|_{\BV(E_\varepsilon)}<\varepsilon$ and $\|f-h_\varepsilon\|_{L^1(\Omega)}\leq  \|f\|_{L^1(E_\varepsilon)}$. The proof is completed.
	\end{proof}
	
	In Lemma \ref{lem4.2-Oct}, the conclusions $(1)$ and $(2)$ give us the limit at infinity outside a small set $E_\varepsilon$. 
	We  refer the interested reader to \cite{JKN23} for a discussion on limits at infinity
	outside a \emph{thin set} for $N^{1,p}$ functions on Ahlfors $Q$-regular metric measure spaces supporting
	a $p$-Poincar\'e inequality, where $1<Q<\infty$ and $1\leq p<\infty$. Furthermore, for radial
	and vertical
	limits at infinity on Muckenhoupt weighted Euclidean spaces, existence and uniqueness of limits at infinity
	along infinite paths for Sobolev functions have been studied in the recent works 
	\cite{EKN22, KN23, GKS24}.
	\begin{mybox}
		\begin{lemma}\label{lem4.4-21Oct}
			Let $f\in {\BV}(\Omega)$ so that $|f|\leq b_i$ on $2\cdot A_i$ for $i=0,1,2\ldots$,
			where $b_i$ is a sequence of
			positive numbers.
			Then the functions $f_j$ in \eqref{eq3.7-Sep}
			can be selected such that
			$|f_j|\leq b_i$ on $2 B_j\cap A_i $ when $2B_j\cap A_i\neq\emptyset$ and
			$2B_j\cap  (\bigcup_{k=0}^{i-1}A_k)=\emptyset$.
		\end{lemma}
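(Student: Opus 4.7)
The plan is to modify the Lipschitz approximation $\tilde f_j$ used in the proof of Theorem \ref{thm1.1} by a truncation at level $\pm b_i$. The geometric hypothesis---namely $2B_j\cap A_i\neq\emptyset$ and $2B_j\cap(\bigcup_{k<i}A_k)=\emptyset$---together with $2B_j\subset U\subset\Omega=\bigcup_k A_k$, forces $2B_j\subset\bigcup_{k\geq i}A_k$. Assuming the sequence $(b_k)$ is non-increasing (as is the case in the only application, via Lemma \ref{lem4.2-Oct}, where $b_0=b_1=d_0$ and $b_k=d_{k-2}$ with $(d_k)$ non-increasing), the hypothesis $|f|\leq b_k$ on $2\cdot A_k\supset A_k$ yields the pointwise bound $|f|\leq b_i$ on all of $2B_j$.

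I would then select $\tilde f_j\in\liploc(2\lambda B_j)$ satisfying \eqref{eq3.7-Sep} with tolerances tightened by a factor of $1/2$ (replacing $2^{-j}$ by $2^{-j-1}$ and $\varepsilon$ by $\varepsilon/2$), and set $f_j:=T_{b_i}(\tilde f_j)$, the pointwise truncation at level $\pm b_i$. Since $T_{b_i}$ is $1$-Lipschitz, $f_j$ remains in $\liploc(2\lambda B_j)$; property \eqref{eq3.7-Sep}(1) is preserved because truncation never increases the total variation; property \eqref{eq3.7-Sep}(4) holds with the unchanged $N_j$; and the new bound $|f_j|\leq b_i$ on $2B_j\cap A_i$ is immediate. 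For property \eqref{eq3.7-Sep}(3), on $2B_j\setminus N_j$ we have $|\tilde f_j|\leq |f|+2^{-j-1}\leq b_i+2^{-j-1}$, so $T_{b_i}$ adjusts $\tilde f_j$ by at most $2^{-j-1}$ there, giving $|f_j-f|\leq 2^{-j}$.

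The main obstacle lies in property \eqref{eq3.7-Sep}(2), the $L^1$ bound on $2\lambda B_j$: the function $|f|$ may exceed $b_i$ on $2\lambda B_j\setminus 2B_j$ (since $2\lambda B_j$ may reach into earlier annuli where the bound on $|f|$ is only $b_k$ for some $k<i$), so global truncation can distort $\tilde f_j$ substantially there. If the plain truncation does not suffice, I would replace it with a cutoff-localized version $f_j:=\tilde f_j-\phi_j(\tilde f_j-T_{b_i}(\tilde f_j))$, where $\phi_j$ is a Lipschitz cutoff equal to $1$ on $2B_j$ and supported in a slightly enlarged ball inside $2\lambda B_j$; the $L^\infty$ bound $|f|\leq b_0$ (valid in the application) together with a sufficiently small $\mu(N_j)$ then allow the additional $L^1$ and variation errors introduced by the cutoff to be absorbed into the required tolerance $2^{-j}\varepsilon r_j$.
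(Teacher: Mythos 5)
Your proposal follows the paper's approach: truncate $f_j$ at level $\pm b_i$ and use that truncation is a $1$-Lipschitz contraction to preserve the properties in \eqref{eq3.7-Sep}. You are also right that the key assertion $|f|\leq b_i$ on $2B_j$ implicitly requires $(b_i)$ to be non-increasing (as indeed happens in the sole application via Lemma \ref{lem4.2-Oct}), since $2B_j\subset\bigcup_{k\geq i}A_k$ together with $A_k\subset 2\cdot A_k$ only gives $|f|\leq b_k$ on $2B_j\cap A_k$ for each $k\geq i$; the statement of the lemma leaves this hypothesis implicit, and it would have been cleaner to state it.

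Where you overcomplicate is the treatment of \eqref{eq3.7-Sep}(2). Your worry that $|f|$ may exceed $b_i$ on $2\lambda B_j\setminus 2B_j$, so that truncation can distort $f_j$ there, is a fair criticism of the paper's terse claim that ``$f_j'$ still satisfies \eqref{eq3.7-Sep}''; but it has no bearing on the argument. In Lemma \ref{lem3.2-28Sep} (see \eqref{eq3.12-Sep}) and in Lemma \ref{lem3.3-Sep}, the second property of \eqref{eq3.7-Sep} is invoked only through $\int_{2B_j}|f_j-f|\,d\mu$, never over the larger ball $2\lambda B_j$. On $2B_j$ one has $|f|\leq b_i$, hence $T_{b_i}(f)=f$ there, and $|T_{b_i}(f_j)-f|=|T_{b_i}(f_j)-T_{b_i}(f)|\leq|f_j-f|$, so $\int_{2B_j}|f_j'-f|\,d\mu\leq 2^{-j}\varepsilon r_j$ is preserved with no need even to tighten the tolerances. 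The plain truncation therefore suffices, and the cutoff-localized variant you sketch (which would require balancing the cutoff's Lipschitz constant $\sim 1/(\delta r_j)$ against the measure of the collar, together with the $L^{\infty}$ bound on $f$) is unnecessary machinery.
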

	\end{mybox}
	\begin{proof} Let $f_j$ be a sequence in the construction \eqref{eq3.7-Sep}. We now define $f_j'$ by
		setting for  $2B_j\cap A_i\neq\emptyset$ and $2B_j\cap (\bigcup_{k=0}^{i-1}A_k)=\emptyset$ that
		\[
		f_j':=\max\{-b_i, \min\{ b_i, f_j\}\}.
		\]
		Note that $|f|\le b_i$ in $2B_j$.
		Therefore, the sequence $f_j'$ still satisfies \eqref{eq3.7-Sep}. The proof is completed.
	\end{proof}
	
	\begin{proof}
		[Proof of Corollary \ref{cor1.2}]
		Let $f\in \BV(\Omega)$ and let $\varepsilon>0$.
		By Lemma \ref{lem4.2-Oct},
		we find an open set $E_\varepsilon$ with
		${\rm Cap}_1(E_\varepsilon)<\varepsilon$ and
		a function $h_{\varepsilon}$ with $\|f-h_\varepsilon\|_{\BV(\Omega)}<\varepsilon/2$,
		$h_{\varepsilon}^\vee\equiv  f^\vee$,
		and $h_{\varepsilon}^\wedge\equiv  f^\wedge$ in $X\setminus E_\varepsilon$,
		and there is a sequence $\{ b_i\}_{i=0}^{\infty}$ with $b_i>0$ and $\lim_{i\to\infty}b_i=0$,
		and so that for all $i=0,1,\ldots$,
		$|h_{\varepsilon}|\leq b_i$ in $2\cdot A_i$.
		
		Then, starting from the function $h_{\varepsilon}$, we apply
		Theorem \ref{thm1.1} to obtain a function
		$f_\varepsilon$ and a subset $U_\varepsilon'$ of $\Omega$  defined as in \eqref{equ3.1-28Sep} and \eqref{def-f}.
		By the theorem, we can obtain ${\rm Cap}_1(U'_\varepsilon)<\varepsilon/2$,
		$\|h_\varepsilon-f_\varepsilon\|_{\BV(\Omega)}<\varepsilon/2$,
		$h_\varepsilon^\vee\equiv f_\varepsilon^\vee$ and
		$h_\varepsilon^\wedge\equiv f_\varepsilon^\wedge$ on $\Omega\setminus U'_\varepsilon$,
		and $f_\varepsilon^\vee$ is upper  semicontinuous and  $f_\varepsilon^\wedge$ is lower semicontinuous on $\Omega$.
		Defining $U_{\varepsilon}:=E_\varepsilon\cup U'_{\varepsilon}$, we have that
		$f_\varepsilon$ and $U_\varepsilon$ satisfy the properties given in Theorem \ref{thm1.1}.
		
		By Lemma \ref{lem4.4-21Oct}, we can choose the functions $f_j$ in \eqref{eq3.7-Sep} so that  
		$|f_j|\leq b_i$ on $A_i\cap 2 B_j$ when $2B_j\cap A_i\neq\emptyset$ and
		$2B_j\cap (\bigcup_{k=0}^{i-1}A_k)=\emptyset$.
		Then by the definition of $f_\varepsilon$ in \eqref{def-f} and 
		since $b_i\to0$ as $i\to\infty$, we obtain the  claim \eqref{eq1.2-15Nov}. The proof is completed.
	\end{proof}

	\section{Proof of Corollary \ref{cor1.4}}\label{sec:Euclidean}

	We begin by giving an example showing that the conclusion of Corollary \ref{cor1.4}
	does not hold in the metric measure space setting.
	
	\begin{example}\label{example5.4}
		We will construct a metric measure space $X$ on the plane. Let 
		\[
		{\bf O}:=(0,0),\ \ {\bf A}:=(0,1),\ \ {\bf C}:=(1,1)
		\]
		be points in $\mathbb R^2$. We set $X:=[{\bf O},{\bf A}]\cup[{\bf O},{\bf C}]$ where $[{\bf O},{\bf A}]$
		denotes the closed line segment connecting the points ${\bf O},{\bf A}$.
		We equip $X$ with
		the Euclidean distance $d_2$ and the $1$-dimensional Hausdorff measure $\mathcal H^1$. Then
		$(X,d_2,\mathcal H^1)$ is complete, $\mathcal H^1$ is doubling on $X$, and the space
		supports a $1$-Poincar\'e inequality.
		Let 
		\[
		f:=\chi_{[{\bf O},{\bf A}]}\in \BV(X).
		\]
		Then we have for every $x\in ({\bf O},{\bf A}]$, by taking $r:=\dist(x, [{\bf O},{\bf C}])$, 
		\[
		\mathcal Mf(x)\geq \dashint_{B(x,r)}f\,d\mathcal H^1=1.
		\]
		Hence $\mathcal Mf= 1$ on $({\bf O},{\bf A}]$. To prove that $\mathcal Mf$ is not continuous at ${\bf O}$, 
		it suffices to prove that $\mathcal Mf({\bf O})<1$. Let $B$ be an arbitrary open ball with center $x_B$
		and radius $r_B>0$, with ${\bf O}\in B$. Let
		$S_{\bf A}\in[{\bf O}, {\bf A}], S_{\bf C}\in [{\bf O},{\bf C}]$
		be the unique points such that $d_2(x_B, S_{\bf A})=d_2(x_B, S_{\bf C})=r_B$.
		
		Firstly, suppose that $x_B\in [{\bf O},{\bf C}]$. Then
		\[
		\mathcal H^1([{\bf O}, S_{\bf A}])\leq  \mathcal H^1([{\bf O}, S_{\bf C}]).
		\]
		This implies that
		\[
		\dashint_B f \,d\mathcal H^1
		= \frac{\mathcal H^1([{\bf O}, S_{\bf A}])}{ \mathcal H^1([{\bf O}, S_{\bf A}])+\mathcal H^1([{\bf O}, S_{\bf C}])}
		\leq \frac{1}{2}.
		\]
		
		Now we consider $x_B\in[{\bf O},{\bf A}]$. Let ${\bf M}$ be the middle point of $[{\bf O}, S_{\bf A}]$.
		The ball $B$ contains   the ball $B'$ with center at ${\bf M}$ and radius
		$r':=d_2({\bf O}, {\bf M})$.  Let $S'_{\bf C}\in[{\bf O},{\bf C}]$ be the point intersecting
		$\partial B'$.
		Since the triangle ${\bf O}{\bf M} S'_{\bf C}$ has a right angle at ${\bf M}$ and 
		the angle between
		$[{\bf O}, \bf{M}]$ and $[{\bf O}, S'_{\bf C}]$ is $\pi/4$, we have
		$d_2({\bf O}, S'_{\bf C})=\sqrt 2d_2({\bf O},{\bf M})=\sqrt 2 r'$.  We then have
		\[
		\dashint_B f \,d\mathcal H^1
		\leq \frac{2\mathcal H^1([{\bf O}, {\bf M}])}{ 2\mathcal H^1([{\bf O}, {\bf M}])+\mathcal H^1([{\bf O}, S'_{\bf C}])}= \frac{2r'}{2r'+\sqrt{2} r' }=\frac{2}{2+\sqrt 2}.
		\]
		We conclude that $\mathcal Mf({\bf O})<1$, and thus $\mathcal Mf$ is not continuous at ${\bf O}$.
		
		It is easy to check that every point in $(X,d_2,\mathcal H^1)$ has $1$-capacity at least $1$. 
		We conclude that the non-centered maximal function of a BV function may
		fail to be $1$-quasicontinuous in the metric space setting, contrary to the Euclidean setting.
		
		We also note that choosing $\varepsilon<1$, we have that $U_\varepsilon$ in Theorem \ref{thm1.1}
		is necessarily empty, and then $f=f_{\varepsilon}$ in $X$.
		In fact, $f^{\wedge}$ and $f^{\vee}$ are respectively lower and upper semicontinuous,
		but nonetheless $\mathcal Mf_{\varepsilon}=\mathcal Mf$ fails to be continuous. 
	\end{example}
	
	Due to this example, we formulate Corollary \ref{cor1.4} only in the Euclidean setting.
	
	\begin{proof}
		[Proof of Corollary \ref{cor1.4}]
		Let $f\in \BV(\Omega)$ be nonnegative and $\varepsilon>0$.
		Recall from \eqref{Sf minus Jf} that $S_f\setminus J_f$ has zero $1$-capacity.
		We let  $U$ and $G$ be subsets of $\Omega$ defined as in
		\eqref{equ3.1-28Sep}-\eqref{equ3.2-28Sep},
		so that ${\rm Cap}_1(U)<\varepsilon$, $f^\wedge|_{\Omega\setminus G}$  
		is  lower semicontinuous and $f^\vee|_{\Omega\setminus G}$  is upper semicontinuous,
		and we can further assume that $S_f\setminus J_f \subset G\subset U\subset \Omega$.
		Now,  defining $f_\varepsilon$ by   \eqref{def-f}, we obtain a function
		satisfying the properties given in Theorem \ref{thm1.1}.
		
		It remains to show that $\mathcal M_{\Omega}f_{\varepsilon}$ is continuous on $\Omega$.
		For this, we first prove a couple of auxiliary lemmas.
		\begin{mybox}
			\begin{lemma}\label{lem4.5-19Oct}
				Let $x\in\partial U\cap \Omega$. Then
				\begin{equation}\label{equ4.10-19Oct}
					\lim_{r\to0}\,\dashint_{B(x,r)}|f(y)-f_\varepsilon(y)|\,dy=0.
				\end{equation}
				In particular, we obtain that for every $\delta>0$, there is $r_\delta>0$ so that for all balls $B\subset B(x, r_\delta)$ with $x\in  \overline{B}$,
				\begin{equation}\label{equ4.11-19Oct}
					\dashint_{B}|f(y)-f_\varepsilon(y)|\,dy<\delta.
				\end{equation}
			\end{lemma}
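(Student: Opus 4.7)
The plan is to split the ball $B(x,r)$ into a good set $\mathcal G_r := B(x,r)\setminus(G\cup\bigcup_{j\in J_1}N_j)$, where $|f-f_\varepsilon|$ is uniformly small, and a bad set $\mathcal A_r := B(x,r)\cap(G\cup\bigcup_{j\in J_1}N_j)$ whose density at $x$ tends to $0$, and then to control the two contributions separately. On $\mathcal G_r$, I will reuse the pointwise estimate from the proof of Lemma~\ref{lem3.4-Sep}, namely
\[
|f(y)-f_\varepsilon(y)| \leq \sum_{j\in J_1,\,2B_j\cap B(x,r)\neq\emptyset} 2^{-j},
\]
whose right-hand side tends to $0$ as $r\to0$; the density bound $\mathcal L^d(\mathcal A_r)/\mathcal L^d(B(x,r))\to0$ is supplied by \eqref{equ3.1-28Sep} together with the argument in \eqref{eq3.14-Sep}, so the good-set contribution to the average is $o(1)$.

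For the bad-set contribution, I will bound $|f-f_\varepsilon|\leq|f|+|f_\varepsilon|$ and handle the two terms separately. The first step is to show that $f_\varepsilon$ is essentially bounded on a neighborhood of $x$. By \eqref{def-aj}--\eqref{def-Mjmj} together with the third property of \eqref{eq3.7-Sep}, the relevant coefficients satisfy $|a_j|\leq \max(|M_j|,|m_j|)\leq \mathrm{esssup}_{2B_j\setminus G}|f|+2^{-j}$, and the upper and lower semicontinuity of $f^\vee,f^\wedge$ on $\Omega\setminus G$ at the point $x\in\partial U\cap\Omega\subset\Omega\setminus G$ gives a uniform bound $C_x$ for all $j$ whose Whitney balls meet $B(x,r)$, once $r$ is small. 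Consequently $\int_{\mathcal A_r}|f_\varepsilon|\,dy \leq C_x\mathcal L^d(\mathcal A_r) = o(\mathcal L^d(B(x,r)))$.

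For $\int_{\mathcal A_r}|f|\,dy$, I will use the crucial arrangement $S_f\setminus J_f\subset G\subset U$ from the setup of Corollary~\ref{cor1.4}, which forces $x\in J_f\cup(\Omega\setminus S_f)$. In the Lebesgue-point case $x\notin S_f$, property \eqref{def-Leb} yields
\[
\int_{\mathcal A_r}|f|\,dy \leq |\widetilde f(x)|\,\mathcal L^d(\mathcal A_r)+\int_{B(x,r)}|f-\widetilde f(x)|\,dy,
\]
both of which are $o(\mathcal L^d(B(x,r)))$ after dividing. If instead $x\in J_f$, the analogous splitting applied on each half-ball $B_\nu^{\pm}(x,r)$ using \eqref{def-f+}--\eqref{def-f-} yields the same bound, because $|B_\nu^{\pm}(x,r)|=\tfrac12|B(x,r)|$ in $\mathbb R^d$. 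Assembling the three estimates establishes \eqref{equ4.10-19Oct}. The auxiliary claim \eqref{equ4.11-19Oct} then follows from \eqref{equ4.10-19Oct}: any ball $B\subset B(x,r_\delta)$ with $x\in\overline B$ of radius $s$ satisfies $B\subset B(x,2s)$, so $\dashint_B|f-f_\varepsilon|\,dy\leq 2^d\dashint_{B(x,2s)}|f-f_\varepsilon|\,dy$, and the right-hand side can be made smaller than $\delta$ by choosing $r_\delta$ sufficiently small via \eqref{equ4.10-19Oct}.

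The main obstacle is the careful verification that $f_\varepsilon$ is uniformly bounded on a neighborhood of $x$, which requires tracing through the Whitney-based construction of $f_\varepsilon$ and combining the semicontinuity of $f^\vee, f^\wedge$ on $\Omega\setminus G$ with the vanishing density of $G$ at $x$; once this is in hand, the remaining split into Lebesgue and jump points of $f$ is routine in the Euclidean setting.
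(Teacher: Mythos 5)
Your proof is correct and follows essentially the same route as the paper: split $B(x,r)$ into the good set $B(x,r)\setminus(G\cup\bigcup_{j\in J_1}N_j)$, where the pointwise bound $|f-f_\varepsilon|\le\sum_{j\in J_1:\,2B_j\cap B(x,r)\neq\emptyset}2^{-j}$ from the third property of \eqref{eq3.7-Sep} applies, and the bad set of vanishing density, where the Lebesgue/jump-point structure \eqref{eq4.14-19Oct} together with the density bounds \eqref{eq3.13-Oct} and \eqref{eq3.14-Sep} makes the averaged contribution vanish; and then use doubling for \eqref{equ4.11-19Oct}. The only organizational difference is that you split $|f-f_\varepsilon|\le|f|+|f_\varepsilon|$ on the bad set and use a two-sided uniform bound on $|f_\varepsilon|$ coming from both semicontinuity properties, whereas the paper keeps $|f(y)-a_j(y)|\le f(y)+M_j$ intact and exploits the nonnegativity of $f$ to get the one-sided bound $M_j\le f^\vee(x)+\delta$; both variants are valid given the corollary's hypotheses.
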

		\end{mybox}
		\begin{proof}
			Recall the definitions from Section \ref{sec3}, 
			in particular the definition of $U_1$ from \eqref{equa3.6-Sep}.
			Let $\delta>0$. By \eqref{eq3.23-Oct}-\eqref{eq3.20-28Oct},
			there is $r_\delta>0$ with
			$B(x,2r_\delta)\cap U\subset U_1$ so that for every $j\in J_1$, we have
			\begin{equation} \label{eq4.8-19Oct}
				|f(y)-a_j(y)|\leq |f(y)|+M_j\leq f(y) + f^\vee(x) + \delta
			\end{equation}
			for every $y\in 2B_j$ with $2B_j\cap B(x,r_\delta)\neq\emptyset$;
			recall that $f$ is assumed to be nonnegative.
			Recall from \eqref{eq3.13-Oct} and \eqref{eq3.14-Sep} that 
			\begin{equation}\label{eq4.9-19Oct}
				\lim_{r\to0}\left(\frac{\mathcal L^d(B(x,r)\cap G)}{\mathcal L^d(B(x,r))}+ 
				\sum_{j\in J_1:\, 2B_j\cap B(x,r)\neq\emptyset}\frac{\mathcal L^d(N_j)}{\mathcal L^d(B(x,r))}\right)=0.
			\end{equation}
			Recall from \eqref{def-Leb},  \eqref{def-f+},  and \eqref{def-f-} that
			for all subsets $B\subset B(x,r)$ with
			$\mathcal L^d(B)/\mathcal L^d(B(x,r))$ bounded from below
			uniformly in $r$,
			\begin{equation}
				\begin{cases}
					\lim_{r\to0}\frac{1}{\mathcal L^d(B)}\int_{B\cap B^+_\nu(x,r)}|f(y)-f^+(x)|\,dy =0\quad\textrm{and}\\
					\lim_{r\to0}\frac{1}{\mathcal L^d(B)} \int_{B\cap B^-_\nu(x,r)}|f(y)-f^-(x)|\,dy  =0
					\quad\textrm{if }x\in J_f;\\
					\lim_{r\to0}\frac{1}{\mathcal L^d(B)} \int_{B}|f(y)-\widetilde{f}(x)|\,dy =0
					\quad\textrm{if }x\in \Omega\setminus S_f.
				\end{cases}\label{eq4.14-19Oct}
			\end{equation}
		Recall that the dilated Whitney balls $2B_j$ have overlap at most $C_0$. 
			It follows that 
			\begin{align}
				& \limsup_{r\to0}  \sum_{j\in J_1}\frac{1}{\mathcal L^d(B(x,r))}\int_{B(x,r)\cap 2B_j\cap (G\cup N_j)} |f(y)-a_j(y)|\,dy\notag \\ 
				&\quad\overset{\eqref{eq4.8-19Oct}}{\leq} \limsup_{r\to0}  \sum_{j\in J_1}\frac{1}{\mathcal L^d(B(x,r))}\int_{B(x,r)\cap 2B_j\cap (G\cup N_j)}f(y)\,d y\notag\\
				&\quad\quad + (f^\vee(x)+\delta) \limsup_{r\to 0}  \sum_{j\in J_1} \frac{\mathcal L^d(B(x,r)\cap 2B_j\cap (G\cup N_j))}{\mathcal L^d(B(x,r))}  \notag\\
				&\quad\leq  C_0 \limsup_{r\to0}  
				\frac{1}{\mathcal L^d(B(x,r))}\int_{B(x,r)}\bigg(\big(|f(y)-f^+(x)|\chi_{B_\nu^+(x,r)}(y)  \notag \\ &\quad\quad + |f(y)-f^-(x)|\chi_{B_\nu^-(x,r)}(y)\big)\chi_{J_f}(x)  + |f(y)-\widetilde{f}(x)|\chi_{\Omega\setminus S_f}(x)\bigg)\,dy\notag\\
				&\quad\quad\quad  + \bigg(3\max\{ f^+(x), f^-(x),\widetilde{f}(x)\} 
				+f^\vee(x)+\delta\bigg)\notag\\
				&\quad\quad\quad\quad \times \limsup_{r\to 0}  
				 \left(C_0\frac{\mathcal L^d(B(x,r)\cap G)}{\mathcal L^d(B(x,r))}+ C_0
				\sum_{j\in J_1:\, 2B_j\cap B(x,r)\neq\emptyset}\frac{\mathcal L^d(N_j)}{\mathcal L^d(B(x,r))}\right) \notag \\
				&\quad\quad = 0  \label{eq4.10-19Oct}
			\end{align}
			by  \eqref{eq4.9-19Oct} and \eqref{eq4.14-19Oct} with the choice $B=B(x,r)$.
			We then obtain
			\begin{align*}
				&\limsup_{r\to0}\dashint_{B(x,r)}|f(y)-f_\varepsilon(y)|\,dy\\
				&=\limsup_{r\to0}\int_{B(x,r)\cap U}\frac{|f(y)-f_\varepsilon(y)|}{\mathcal L^d(B(x,r))}\,dy\\
				&\le \limsup_{r\to0} \sum_{j\in J_1}\bigg(\int_{B(x,r)\cap 2B_j\setminus(G\cup N_j)}\frac{|f(y)-f_j(y)|}{\mathcal L^d(B(x,r))} \,dy + \int_{B(x,r)\cap 2B_j\cap(G\cup N_j)}\frac{|f(y)-a_j(y)|}{\mathcal L^d(B(x,r))} \,dy   \bigg)\\
				&\overset{\eqref{eq3.7-Sep}\textrm{ 3rd property } \&\ \eqref{eq4.10-19Oct}}{\leq}  \limsup_{r\to0}\sum_{j\in J_1: 2B_j\cap B(x,r)\neq\emptyset} 2^{-j} +0\\
				& = 0,
			\end{align*}
			which is the main claim \eqref{equ4.10-19Oct}. The last conclusion \eqref{equ4.11-19Oct} is obtained by the doubling property of $\mathcal L^d$ and the main claim \eqref{equ4.10-19Oct}.
			The proof is completed.
		\end{proof}
		\begin{mybox}
			\begin{lemma}\label{lem4.6-14Oct}
				We have $\mathcal M_{\Omega} f_\varepsilon\geq f_\varepsilon^\vee$ in $\Omega$.
			\end{lemma}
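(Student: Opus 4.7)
My plan is to produce, for each $x \in \Omega$, a sequence of balls $B_k \subset \Omega$ with $x \in \overline{B_k}$ along which $\dashint_{B_k} |f_\varepsilon|\, dy$ tends to $f_\varepsilon^\vee(x)$ from below. I will first arrange $f_\varepsilon \geq 0$, which is permitted by replacing each approximating function $f_j$ in \eqref{eq3.7-Sep} with $\max\{0, f_j\}$: since $f \geq 0$ by hypothesis of Corollary \ref{cor1.4}, this substitution preserves all four properties in \eqref{eq3.7-Sep}. Then $|f_\varepsilon| = f_\varepsilon$ and it suffices to chase $\dashint f_\varepsilon$.

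The argument splits according to the position of $x$. If $x \in U$, then $f_\varepsilon|_U = \sum_j a_j \varphi_j$ is a locally finite sum of continuous functions, so $f_\varepsilon$ is continuous at $x$, giving $f_\varepsilon^\vee(x) = f_\varepsilon(x) = \lim_{r \to 0}\,\dashint_{B(x,r)} f_\varepsilon\, dy$ for any $r$ small enough that $B(x,r)\subset \Omega$. If instead $x \in \Omega \setminus U$, Lemma \ref{lem3.4-Sep} gives $f_\varepsilon^\vee(x) = f^\vee(x)$. Here it is crucial that in the setup of the proof of Corollary \ref{cor1.4} we have arranged $S_f \setminus J_f \subset G \subset U$, which is possible because ${\rm Cap}_1(S_f \setminus J_f) = 0$ by \eqref{Sf minus Jf}. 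Consequently $x$ is either a Lebesgue point of $f$ or an approximate jump point.

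At a Lebesgue point ($x \notin S_f$), centered balls $B(x,r)$ already do the job: $\dashint_{B(x,r)} f\, dy \to \widetilde{f}(x) = f^\vee(x)$ by \eqref{def-Leb} and \eqref{eq4.6-15Oct}. At a jump point $x \in J_f$, I pick $\nu \in \mathbb{S}^{d-1}$ realizing the jump with $f^+(x) \geq f^-(x)$, so that $f^+(x) = f^\vee(x)$, and use the off-center balls $B_r := B(x + r\nu, r)$; these satisfy $B_r \subset B^+_\nu(x, 2r)$ with $\mathcal{L}^d(B_r) = 2^{1-d} \mathcal{L}^d(B^+_\nu(x, 2r))$, so \eqref{def-f+} forces $\dashint_{B_r} f\, dy \to f^+(x) = f^\vee(x)$. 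In both sub-cases the balls lie in $\Omega$ for small $r$ and have $x \in \overline{B}$, making them admissible in the formulation \eqref{equ4.6-13Oct} of $\mathcal{M}_\Omega$. To pass from $f$ to $f_\varepsilon$ I invoke the uniform estimate \eqref{equ4.11-19Oct} of Lemma \ref{lem4.5-19Oct}, which is engineered exactly for balls $B \subset B(x, r_\delta)$ with $x \in \overline{B}$; this yields $\dashint_B f_\varepsilon\, dy \to f_\varepsilon^\vee(x)$ and therefore $\mathcal{M}_\Omega f_\varepsilon(x) \geq f_\varepsilon^\vee(x)$.

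\textbf{Main obstacle.} The delicate configuration is $x \in \partial U \cap J_f$: centered averages would only detect $(f^+(x)+f^-(x))/2$, not $f^\vee(x) = \max\{f^+(x), f^-(x)\}$, so off-center balls are indispensable, and one needs the approximation $f \approx f_\varepsilon$ to persist on such asymmetric balls. All of this technical content is already absorbed into the uniform-over-$x\in\overline{B}$ conclusion \eqref{equ4.11-19Oct} of Lemma \ref{lem4.5-19Oct}, so once that lemma is invoked, the case analysis above closes the argument with no further estimates.
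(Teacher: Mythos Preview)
Your argument is correct and follows essentially the same route as the paper: the three-way split $x\in U$, $x\in\Omega\setminus\overline U$, $x\in\partial U\cap\Omega$; continuity of $f_\varepsilon$ on $U$; the Lebesgue/jump dichotomy for $f$ on $\Omega\setminus U$ (available because $S_f\setminus J_f\subset U$); and the passage from $f$ to $f_\varepsilon$ via Lemma~\ref{lem4.5-19Oct} on $\partial U$. Your explicit choice of off-center balls $B(x+r\nu,r)$ at jump points is exactly what the paper's appeal to \eqref{eq4.14-19Oct} and the supremum over $\mathcal T_1$ encodes. One small imprecision: you invoke Lemma~\ref{lem4.5-19Oct} for all $x\in\Omega\setminus U$, but that lemma is stated only for $x\in\partial U\cap\Omega$; for $x\in\Omega\setminus\overline U$ you should instead note that the small balls lie in $\Omega\setminus U$ where $f_\varepsilon=f$, so no comparison lemma is needed there. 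Also, your preliminary replacement $f_j\mapsto\max\{0,f_j\}$ is harmless but unnecessary: since $\dashint_B|f_\varepsilon|\ge\dashint_B f_\varepsilon$, the inequality $\mathcal M_\Omega f_\varepsilon(x)\ge\sup_B\dashint_B f_\varepsilon$ holds regardless of the sign of $f_\varepsilon$, and that is all the argument uses.
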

		\end{mybox}
		\begin{proof}
			Recall that $f$ is nonnegative.
			If $x\in U$, then $f_\varepsilon$ is continuous at $x$ and so
			$\mathcal M_{\Omega} f_\varepsilon (x) \geq \widetilde f_\varepsilon(x) = f_\varepsilon^\vee(x)$.
			If $x\in \Omega\setminus \overline{U}$, then
			by \eqref{equ4.6-13Oct} we have
			\[
			\mathcal M_{\Omega} f_\varepsilon(x)\geq \sup \dashint_{B_r}|f(y)|dy \overset{ \eqref{eq4.14-19Oct} }{\geq}   \max\{\widetilde{f}(x),f^+(x), f^-(x) \}=f^\vee(x)=f_\varepsilon^\vee(x)
			\]
			where the supremum is taken over all balls $B_r$ with radius
			$r< {\rm dist}(x, U)$, $x\in  \overline{B_r}$ and
			$B_r\subset \Omega\setminus \overline{U}$; here the first equality is obtained
			from \eqref{eq4.6-15Oct} and $S_f\setminus J_f\subset U$,
			and the last equality is obtained from the fact that
			$f^\vee=f^\vee_\varepsilon$ on $\Omega\setminus \overline{U}$.
			
			Finally, consider $x\in\partial U\cap \Omega$. Let $\delta>0$. 
			We have from \eqref{equ4.11-19Oct} in Lemma \ref{lem4.5-19Oct} that there is $r_\delta>0$ so that 
			\begin{equation}\label{eq4.16-19Oct}
				\sup_{B\in \mathcal T_1}\,\dashint_{B}{|f(y)-f_\varepsilon(y)|}\,dy<\delta.
			\end{equation}
			Here $\mathcal T_1$ is the collection of all balls $B\subset B(x, r_\delta)$ with $x\in  \overline{B}$.
			It follows that
			\begin{align}
				\mathcal M_{\Omega} f_\varepsilon (x) 
				&\geq  \sup_{B\in \mathcal T_1}\, \dashint_{B}f_\varepsilon(y)\,dy    \notag \\
				&\geq  \sup_{B\in \mathcal T_1}\, \dashint_{B} f(y)\,dy - \sup_{B\in \mathcal T_1}\,
				\dashint_{B} |f(y)-f_\varepsilon(y)|\, dy\notag \\
				&\overset{\eqref{eq4.16-19Oct}}{\geq}   \sup_{B\in\mathcal T_1}\, \dashint_{B}  f(y)\,dy -\delta\notag\\
				&\overset{\eqref{eq4.14-19Oct}}{\geq}  \max\{\widetilde{f}(x),f^+(x), f^-(x) \}-\delta\notag\\
				&\overset{\eqref{eq4.6-15Oct} }{=} f^\vee(x)-\delta\quad \textrm{since } S_f\setminus J_f\subset U \notag\\
				&=f_\varepsilon^\vee(x)-\delta\notag,
			\end{align}
			since $f^\vee=f^\vee_\varepsilon$ in $\Omega\setminus U$ by Lemma \ref{lem3.4-Sep}.
			Letting $\delta\to0$, we obtain the claim.
		\end{proof}
		\begin{mybox}
			\begin{lemma}\label{lem5.3-5Nov}
				Let $x\in \Omega$. If $\mathcal M_{\Omega}f_\varepsilon(x)\geq f^\vee_\varepsilon(x)$,
				then $\mathcal M_{\Omega}f_\varepsilon$ is upper semicontinuous at $x$.
			\end{lemma}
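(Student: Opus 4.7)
The plan is to prove $\limsup_{y\to x}\mathcal M_\Omega f_\varepsilon(y)\leq\mathcal M_\Omega f_\varepsilon(x)$ by picking a sequence $y_n\to x$ realizing this limsup, for each $y_n$ using the equivalent formulation \eqref{equ4.6-13Oct} to select a near-optimal closed-ball competitor $\overline B(z_n,r_n)$ satisfying $y_n\in\overline B(z_n,r_n)$, $B(z_n,r_n)\subset\Omega$ and
\[
\dashint_{B(z_n,r_n)}|f_\varepsilon|\,dy\geq \mathcal M_\Omega f_\varepsilon(y_n)-\frac1n,
\]
and then passing to a subsequence along which $r_n\to r^\ast\in[0,\infty]$. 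Because $d(z_n,y_n)\leq r_n$, the centers $z_n$ remain bounded whenever $r_n$ is bounded, so one may pass to a further subsequence so that $z_n\to z^\ast$ in the finite-radius cases below. The argument then splits into three cases according to $r^\ast$.

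In the case $r^\ast\in(0,\infty)$ I will check that $x\in\overline B(z^\ast,r^\ast)$ and $B(z^\ast,r^\ast)\subset\Omega$ (the latter because $\Omega$ is open and every $w\in B(z^\ast,r^\ast)$ eventually lies in $B(z_n,r_n)\subset\Omega$), making $\overline B(z^\ast,r^\ast)$ an admissible competitor for $\mathcal M_\Omega f_\varepsilon(x)$; then dominated convergence applied to $\chi_{B(z_n,r_n)}|f_\varepsilon|$ (whose a.e.\ pointwise limit is $\chi_{B(z^\ast,r^\ast)}|f_\varepsilon|$, since $\partial B(z^\ast,r^\ast)$ has zero Lebesgue measure) yields $\dashint_{B(z_n,r_n)}|f_\varepsilon|\,dy\to\dashint_{B(z^\ast,r^\ast)}|f_\varepsilon|\,dy\leq\mathcal M_\Omega f_\varepsilon(x)$. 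In the degenerate case $r^\ast=\infty$, the average is bounded by $\|f_\varepsilon\|_{L^1(\Omega)}/\mathcal L^d(B(z_n,r_n))\to 0\leq\mathcal M_\Omega f_\varepsilon(x)$, which closes this case using only $f_\varepsilon\in\BV(\Omega)\subset L^1(\Omega)$.

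The essential case, and the one that activates both the hypothesis $\mathcal M_\Omega f_\varepsilon(x)\geq f_\varepsilon^\vee(x)$ and the upper semicontinuity of $f_\varepsilon^\vee$ provided by Theorem \ref{thm1.1}(4), is the shrinking case $r^\ast=0$, in which $z_n\to x$ as well. Here, for any fixed $\eta>0$ the upper semicontinuity of $f_\varepsilon^\vee$ at $x$ supplies $\rho>0$ with $f_\varepsilon^\vee(y)\leq f_\varepsilon^\vee(x)+\eta$ for every $y\in B(x,\rho)\cap\Omega$; combining this with $f_\varepsilon=f_\varepsilon^\vee$ at $\mathcal L^d$-a.e.\ point and $f_\varepsilon\geq 0$ a.e.\ (stemming from $f\geq 0$ and the Lipschitz approximations in \eqref{eq3.7-Sep}, up to a $2^{-j}$-error absorbable into $\eta$), I get $\dashint_{B(z_n,r_n)}|f_\varepsilon|\,dy\leq f_\varepsilon^\vee(x)+\eta$ once $B(z_n,r_n)\subset B(x,\rho)$, whence $\limsup_n\dashint_{B(z_n,r_n)}|f_\varepsilon|\,dy\leq f_\varepsilon^\vee(x)\leq\mathcal M_\Omega f_\varepsilon(x)$. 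The main technical obstacle I anticipate is precisely the control of $|f_\varepsilon|$ by $f_\varepsilon^\vee$ in this step: the truncation \eqref{def-aj} can make $f_\varepsilon$ slightly signed even when $f\geq 0$, so I will either exploit the $2^{-j}$-estimates of \eqref{eq3.7-Sep} to absorb the defect into $\eta$, or pre-modify the construction by replacing $f_j$ with $\max(f_j,0)$, a change that still respects \eqref{eq3.7-Sep} whenever $f$ is nonnegative.
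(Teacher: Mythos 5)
Your proposal is correct and follows essentially the same route as the paper: realize the limsup by a sequence, choose near-optimal balls via \eqref{equ4.6-13Oct}, pass to subsequences along which the radii (and centers) converge, then split into the shrinking case (handled by the upper semicontinuity of $f_\varepsilon^\vee$ together with the hypothesis $\mathcal M_\Omega f_\varepsilon(x)\geq f_\varepsilon^\vee(x)$) and the case of a nondegenerate limiting ball (handled by taking it as an admissible competitor). The paper dispenses with the $r^\ast=\infty$ branch by remarking that the radii may be assumed uniformly bounded since $f_\varepsilon\in L^1(\Omega)$, and it writes $\dashint f_\varepsilon$ rather than $\dashint |f_\varepsilon|$ in the shrinking case without comment; your explicit attention to the sign of $f_\varepsilon$ (via the $2^{-j}$ bounds or by replacing $f_j$ with $\max(f_j,0)$, which still satisfies \eqref{eq3.7-Sep} since $f\ge 0$ and truncation does not increase the variation) fills in a small detail the paper leaves implicit.
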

		\end{mybox}
		\begin{proof}
			Take an arbitrary sequence $x_j\to x$, $x_j\in \Omega$, such that
			\[
			\lim_{j\to\infty}\mathcal M_{\Omega} f_\varepsilon(x_j)=\limsup_{y\to x}
			\mathcal M_{\Omega} f_\varepsilon(y).
			\]
			We only need to show that
			$\mathcal M_{\Omega} f_\varepsilon(x)\ge \lim_{j\to\infty}\mathcal M_{\Omega} f_\varepsilon(x_j)$.
			
			We find ``almost optimal'' balls $B(x_j^*,r_j)$, such that
			\begin{equation}\label{eq:choice of almost optimal balls}\notag
				\lim_{j\to\infty}\mathcal M_{\Omega}f_\varepsilon(x_j)=\lim_{j\to\infty}\,\dashint_{B(x_j^*,r_j)}f_\varepsilon\,dy,
			\end{equation}
			with $x_j\in B(x_j^*,r_j)\subset\Omega$.
			Since $f_\varepsilon\in L^1(\Omega)$, we can assume that the radii $r_j$ are uniformly bounded.
			Now we consider two cases.
			
			\textbf{Case 1.} Suppose that by passing to a subsequence (not relabeled),
			we have $r_j\to 0$.
			We have
			\begin{equation}\label{eq4.18-19Oct}\notag
				\mathcal M_{\Omega} f_\varepsilon(x)\geq f^\vee_\varepsilon(x)\geq 
				\lim_{j\to\infty}\dashint_{B(x_j^*,r_j)}f_\varepsilon\,dy
			\end{equation}
			by the upper semicontinuity of $f^\vee_\varepsilon$ at $x$. Thus the conclusion follows.
			
			\textbf{Case 2.}
			The second possibility is that passing to a subsequence (not relabeled),
			we have $r_j\to r\in (0,\infty)$.
			Passing to a further subsequence (not relabeled),
			the vectors $x_j^*-x_j$ (since they have length at most $r_j)$
			converge to some $v\in\R^d$.
			Now for $x^*:=x+v$ we have
			$\chi_{B(x_j^*,r_j)}\to \chi_{B(x^*,r)}$ in $L^1(\R^d)$, with
			$x\in \overline{B}(x^*,r)$ and $B(x^*,r)\subset \Omega$.
			In this case we have by \eqref{equ4.6-13Oct} that
			\[
			\mathcal M_{\Omega} f_\varepsilon(x)\ge \dashint_{B(x^*,r)}f_\varepsilon\,dy
			=\lim_{j\to\infty}\,\dashint_{B(x_j^*,r_j)}f_\varepsilon\,dy
			=\lim_{j\to\infty}\mathcal M_{\Omega} f_\varepsilon(x_j).
			\]
			This completes the proof.	
		\end{proof}
		
		[Proof of Corollary \ref{cor1.4} continued] Notice from the definition of the non-centered
		maximal function that $\mathcal M_{\Omega} f_\varepsilon$ is obviously lower semicontinuous in $\Omega$.
		By Lemma \ref{lem4.6-14Oct} and Lemma \ref{lem5.3-5Nov}, we obtain that
		$\mathcal M_{\Omega} f_\varepsilon$ is upper semicontinuous in
		$\Omega$. Hence $\mathcal M_{\Omega} f_\varepsilon$ is continuous in $\Omega$,
		which is the last claim.
		The proof is completed.
	\end{proof}

	\vspace{1cm}
	
	{\bf Data availability}: No data was used for the research described in this article.
	
	\newcommand{\etalchar}[1]{$^{#1}$}
	
\end{document}